\documentclass[11pt, a4paper]{amsart}
\numberwithin{equation}{section}
\usepackage{amsmath}
\usepackage{amsthm}
\usepackage{amsfonts}
\usepackage{amssymb}
\usepackage{verbatim}
\usepackage{graphicx}
\usepackage{geometry}
\usepackage{a4wide}
\usepackage[latin1]{inputenc}

\DeclareMathOperator{\CT}{CT}

\DeclareMathOperator{\Gr}{Gr}

\DeclareMathOperator{\SL}{SL}
\DeclareMathOperator{\Mp}{Mp}

\renewcommand{\O}{\mathrm{O}}

\newcommand{\N}{\mathbb{N}}
\newcommand{\Z}{\mathbb{Z}}
\newcommand{\R}{\mathbb{R}}
\newcommand{\C}{\mathbb{C}}
\newcommand{\Q}{\mathbb{Q}}
\renewcommand{\H}{\mathbb{H}}

\DeclareMathOperator{\res}{res}
\DeclareMathOperator{\tr}{tr}

\DeclareMathOperator{\e}{\mathfrak{e}}

\DeclareMathOperator{\cusp}{cusp}

\DeclareMathOperator{\spt}{spt}
\DeclareMathOperator{\sgn}{sgn}
\DeclareMathOperator{\hol}{hol}

	\newtheorem{Satz}{Satz}[section]
	\newtheorem{Theorem}[Satz]{Theorem}
	
	\newtheorem{Proposition}[Satz]{Proposition} 
	\newtheorem{Corollary}[Satz]{Corollary}

	\theoremstyle{definition} 
	\newtheorem{Definition}[Satz]{Definition}
	\newtheorem{Example}[Satz]{Example}
	\newtheorem{Remark}[Satz]{Remark}
\date{\today}

\author{Jan Hendrik Bruinier}
\address{Fachbereich Mathematik, Technische Universit\"at Darmstadt, Schlossgartenstra{\ss}e 7, D--64289 Darmstadt, Germany}
\email{bruinier@mathematik.tu-darmstadt.de}

\author{Markus Schwagenscheidt}
\address{Mathematical Institute, University of Cologne, Weyertal 86-90, D--50931 Cologne, Germany}
\email{mschwage@math.uni-koeln.de}

\title[Theta lifts for Lorentzian lattices]{Theta lifts for Lorentzian lattices and coefficients of mock theta functions}

%\subjclass[2000]{11F37, 11F27, 11G16}

\thanks{The first author is partially supported by the DFG Research Unit FOR 1920 \lq Symmetry, Geometry and Arithmetic\rq \, and the LOEWE Reseach Unit USAG. The second author is supported by the SFB-TRR 191 \lq Symplectic Structures in Geometry, Algebra and Dynamics\rq, funded by the DFG. 
The authors thank Michael Mertens for helpful discussions}

\begin{document} 

\begin{abstract}
	We evaluate regularized theta lifts for Lorentzian lattices in three different ways. In particular, we obtain formulas for their values at special points involving coefficients of mock theta functions. By comparing the different evaluations, we derive recurrences for the coefficients of mock theta functions, such as Hurwitz class numbers, Andrews' spt-function, and Ramanujan's mock theta functions.
\end{abstract}

\maketitle

\section{Introduction}

Special values of modular forms at complex multiplication points play a prominent role in number theory. For instance, they are crucial in explicit class field theory and in the Chowla-Selberg formula. Moreover, the evaluation of automorphic Green functions at CM points determines archimedean height pairings appearing in the Gross-Zagier formula and some of its generalizations \cite{gz}. These can be computed using suitable see-saw dual pairs for regularized theta lifts of harmonic Maass forms to orthogonal groups of signature $(2,n)$, see \cite{schofer} or \cite{bruinieryang}, for example.

In the present paper, we study similar special values of regularized theta lifts 
for Lorentzian lattices, that is, for even lattices of signature $(1,n)$. It was shown in \cite{bruinierzemel} that these values appear naturally as vanishing orders of Borcherds products along boundary divisors of toroidal compactifications of orthogonal Shimura varieties. Here we compute the special values in several different ways. As an application, we derive general recursive formulas for the coefficients of mock theta functions. These include the Hurwitz-Kronecker class number relations and some generalizations as special cases. We give a more detailed outline of our results now.
 
%Introductory pragraph. Cite \cite{bruinierzemel}, \cite{schofer}, .

\subsection{Theta lifts for Lorentzian lattices} 

Let $L$ be %a Lorentzian lattice, that is, 
an even lattice of signature $(1,n)$ with $n \geq 1$. Let $L'$ denote its dual lattice and let $\Gr(L)$ be the Grassmannian of positive lines in $L \otimes \R$. The associated Siegel theta function $\Theta_{L}(\tau,z)$ is defined on $\H \times \Gr(L)$ and takes values in the group ring $\C[L'/L]$. As a function of $\tau \in \H$ it transforms like a modular form of weight $(1-n)/2$ for the Weil representation $\rho_{L}$ associated to $L$. Furthermore, it is invariant in $z$ under the subgroup of $\O(L)$ fixing $L'/L$. Following Borcherds \cite{borcherds}, for a harmonic Maass form $f$ of weight $(1-n)/2$ for $\rho_{L}$ we define the \emph{regularized theta lift}
\begin{align*}
\Phi(f,z) = \lim_{T \to \infty}\int_{\mathcal{F}_{T}}\langle f(\tau),\Theta_{L}(\tau,z)\rangle v^{\frac{1-n}{2}}\frac{du dv}{v^{2}}, \qquad (\tau = u+iv \in \H, z \in \Gr(L)),
\end{align*}
where $\mathcal{F}_{T}$ denotes the standard fundamental domain for $\SL_{2}(\Z)$, cut off at height $v = T$, and $\langle \cdot,\cdot \rangle$ is the natural inner product on $\C[L'/L]$ which is antilinear in the second variable. One of the special features in signature $(1,n)$ is that the theta lift $\Phi(f,z)$ converges without the additional regularization employed by Borcherds in \cite{borcherds}. Another peculiarity is the fact that $\Phi(f,z)$ defines a \emph{continuous} function on $\Gr(L)$ which is real-analytic up to singularities along certain sub-Grassmannians of signature $(1,n-1)$, whose shape is determined by the principal part of the input harmonic Maass form $f$.

In this article, we study three different evaluations of the theta lift $\Phi(f,z)$. First, by writing $f$ as a linear combination of Maass Poincar\'e series and using the unfolding argument we obtain an invariant representation of $\Phi(f,z)$ as a convergent series. Second, using the unfolding argument on (a Poincar\'e series representation of) the Siegel theta function $\Theta_{L}(\tau,z)$ we obtain the Fourier expansion of $\Phi(f,z)$. Finally, we evaluate the theta lift at \emph{special points} $w \in \Gr(L)$, which means that $w$ is defined over $\Q$. In this case, we consider the positive definite one-dimensional and negative definite $n$-dimensional sublattices 
\[
P = L \cap w, \qquad N = L \cap w^{\perp}.
\] 
Then $P \oplus N$ has finite index in $L$. For simplicity, let us assume in the introduction that $L = P \oplus N$. Then the Siegel theta function for $L$, evaluated at $w$, naturally splits as a tensor product
\[
\Theta_{L}(\tau,w) = \Theta_{P}(\tau) \otimes \Theta_{N}(\tau)
\]
of the Siegel theta functions associated to $P$ and $N$. Note that $\Theta_{P}(\tau)$ is a holomorphic unary theta series of weight $1/2$. We choose a harmonic Maass form $\widetilde{\Theta}_{P}(\tau)$ of weight $3/2$ which maps to $\Theta_{P}(\tau)$ under the $\xi$-operator and then use Stokes' Theorem to evaluate the regularized integral defining $\Phi(f,w)$. To describe the resulting expression, let us assume for the moment that $f$ is weakly holomorphic. Then we obtain
\begin{align*}
\Phi(f,w) &= \lim_{T \to \infty}\int_{\mathcal{F}_{T}}\langle f(\tau),\Theta_{P}(\tau)\otimes \Theta_{N}(\tau)\rangle v^{\frac{1-n}{2}}\frac{du dv}{v^{2}} \\
&= \CT\left(\left\langle f, \overline{\widetilde{\Theta}_{P}^{+} \otimes \Theta_{N^{-}}}\right\rangle \right),
\end{align*}
where $\CT$ denotes the constant term of a $q$-series, $\widetilde{\Theta}_{P}^{+}$ is the holomorphic part of the harmonic Maass form $\widetilde{\Theta}_{P}$, and $\Theta_{N^{-}}$ is the holomorphic theta series associated to the positive definite $n$-dimensional lattice $N^{-} = (N,-Q)$. A similar method for the evaluation of theta lifts at special points has been used by several authors, see \cite{bruinieryang, ehlenduke, schofer}, for example.

Note that $\widetilde{\Theta}_{P}^{+}(\tau)$ is a mock theta function of weight $3/2$. Hence, by comparing the three evaluations of the theta lift, one can obtain recurrences for the coefficients of mock theta functions. A classical example of such recurrences are the Hurwitz-Kronecker class number relations
\begin{align*}
\sum_{r \in \Z}H(4m-r^{2}) = 2\sigma_{1}(m) - \sum_{\substack{a,b \in \N \\ ab = m}}\min(a,b),
\end{align*}
for $m \in \N$, where $H(n)$ are the Hurwitz class numbers and $\sigma_{1}(m) = \sum_{d \mid m}d$ is the usual divisor sum. Note that the Hurwitz class numbers are the coefficients of the holomorphic part of Zagier's weight $3/2$ non-holomorphic Eisenstein series \cite{zagiereisensteinseries}. In other words, they are the coefficients of a mock theta function. We will see in Section~\ref{section applications} that the class number relations can be deduced by comparing the different evaluations of the theta lift for the even unimodular lattice of signature $(1,1)$.

The problem of finding recurrences for the Hurwitz class numbers has a long history, which goes back at least to Kronecker \cite{kronecker} and Hurwitz \cite{hurwitz2}, and has been further investigated by Eichler \cite{eichler} and Cohen \cite{cohen}, amongst others. By now, there is a very rich literature on recurrences for Hurwitz class numbers and, more generally, for the coefficients of mock theta functions. Such relations can be proved, for example, by studying Fourier coefficients of Eisenstein series (see \cite{cohen, hirzebruchzagier, williams2,williams}) or using the method of holomorphic projection (see \cite{ahlgrenandersenspt,bringmannkanehurwitz,irr,mertens1,mertens2}). Our work yields a new and quite flexible method for finding recurrences of mock theta functions, which offers a lot of freedom in the choice of the lattice $L$ and the special point $w \in \Gr(L)$.

We explain our results in an example in signature $(1,2)$ in more detail now.

\subsection{Theta lifts in signature $(1,2)$} We consider the lattice $L = \Z^{3}$ with the quadratic form $Q(a,b,c) = -b^{2}+ac$. It is an even lattice of signature $(1,2)$. The Grassmannian of $L$ can be identified with the complex upper half-plane $\H$, and the action of the modular group $\Gamma = \SL_{2}(\Z)$ by fractional linear transformations on $\H$ corresponds to its natural action on $L$ by isometries, so we can view the theta lift $\Phi(f,z)$ as a $\Gamma$-invariant function on $\H$. Moreover, vector-valued modular forms for the Weil representation associated to $L$ can be identified with scalar-valued modular forms $f$ for $\Gamma_{0}(4)$ satisfying the Kohnen plus space condition, so we can use scalar-valued modular forms $f$ as inputs for the theta lift. Finally, the set of vectors in the dual lattice $L'$ of fixed norm $-D/4$ may be identified with the set $\mathcal{Q}_{D}$ of integral binary quadratic forms $[a,b,c]$ of discriminant $D = b^{2}-4ac$. These simplifications allow us to describe our results in a more classical language in the introduction. We also refer the reader to Section~\ref{section signature 12} for more details on the translation of the results from the body of the paper to the language used in the introduction.

We let $M_{-1/2}^{!}$ be the space of weakly holomorphic modular forms $f = \sum_{n \gg -\infty}a_{f}(n)q^{n}$ of weight $-1/2$ for $\Gamma_{0}(4)$ satisfying the Kohnen plus space condition $a_{f}(n) = 0$ for $n \equiv 1,2 \pmod 4$. 
%For $f \in M_{-1/2}^{!}$ we define the theta lift
%\[
%\Phi(f,z) = \frac{2}{3}\int_{\Gamma_{0}(4) \backslash \H}^{\reg}f(\tau)\overline{\Theta(\tau,z)}v^{-1/2}\frac{du dv}{v^{2}},
%\]
%where the integral has to regularized as explained in Section~\ref{} and the Siegel theta function $\Theta(\tau,z)$ is defined in \. Note that the factor $\frac{2}{3}$ comes from the identification between vector-valued and scalar-valued modular forms alluded to above. 
For every discriminant $D > 0$ there exists a unique weakly holomorphic modular form $f_{D} \in M_{-1/2}^{!}$ whose Fourier expansion starts $f_{D} = q^{-D}+O(1)$, and these forms constitute a basis of $M_{-1/2}^{!}$. The function $f_{D}$ can be explicitly constructed as a Maass Poincar\'e series. Using the unfolding argument, we obtain the following expression as an infinite series for the theta lift $\Phi(f_{D},z)$.

\begin{Theorem}\label{theorem unfolding introduction}
	For $z = x+iy \in \H$ we have
	\[
	\Phi(f_{D},z) = 4\sum_{[a,b,c]\in \mathcal{Q}_{D}}\left(\sqrt{D}-\frac{1}{y}|a|z|^{2}+bx+c|\arcsin\left(\frac{y\sqrt{D}}{|az^{2}+bz+c|} \right) \right).
	\]
	The series on the right-hand side converges absolutely.
	\end{Theorem}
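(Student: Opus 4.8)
The plan is to realize $f_D$ as a Maass Poincar\'e series and to evaluate $\Phi(f_D,z)$ by the unfolding argument announced in the introduction. Up to a normalizing constant, $f_D$ is the average over $\Gamma_\infty\backslash\SL_2(\Z)$ (in the relevant metaplectic cover) of the seed $\mathcal{M}_s(\pi D v)\,e(-\tfrac{D}{4}u)$, taken at the spectral value $s=\tfrac54$ where this Maass Poincar\'e series is weakly holomorphic of weight $(1-n)/2=-1/2$; here $\Gamma_\infty$ is the stabilizer of $\infty$ and $\mathcal{M}_s$ is the usual Whittaker seed built from the $M$-Whittaker function. Since $\langle f_D(\tau),\Theta_L(\tau,z)\rangle$ is invariant under $\SL_2(\Z)$, I would unfold the integral over the truncated domain $\mathcal{F}_T$ against this average; in the limit $T\to\infty$ this replaces $\mathcal{F}_T$ by the strip $\{\tau=u+iv:0\le u\le 1,\ v>0\}$ and $f_D$ by the single seed. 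Inserting the expansion
\[
\Theta_L(\tau,z)=v^{n/2}\sum_{\lambda\in L'} e\!\left(Q(\lambda_z)\tau+Q(\lambda_{z^\perp})\bar\tau\right)\mathfrak{e}_{\lambda+L},
\]
with $\lambda_z,\lambda_{z^\perp}$ the projections of $\lambda$ to the positive line $z$ and to its negative definite complement, reduces everything to one- and two-dimensional elementary integrals.

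Carrying out the integration over $u\in[0,1]$ annihilates all Fourier modes except those with $Q(\lambda)=-D/4$, which by the dictionary recalled above are exactly the vectors attached to the forms $[a,b,c]\in\mathcal{Q}_D$; thus $\Phi(f_D,z)$ becomes a sum over $\mathcal{Q}_D$ of a single $v$-integral. Using $Q(\lambda)=Q(\lambda_z)+Q(\lambda_{z^\perp})=-D/4$ to simplify the exponential and specializing to $n=2$, this $v$-integral is, for each $\lambda$, the Laplace transform of the confluent hypergeometric seed $\mathcal{M}_{5/4}(\pi D v)$ against a Gaussian; it evaluates to a Gauss hypergeometric function ${}_2F_1(1,1;\tfrac52;w^2)$ in $w=y\sqrt D/|az^2+bz+c|$, which is elementary because of the identity
\[
{}_2F_1\!\left(1,1;\tfrac52;w^2\right)=\frac{3\bigl(w-\sqrt{1-w^2}\,\arcsin w\bigr)}{w^3}.
\]
Translating the geometry through $|az^2+bz+c|^2=(a|z|^2+bx+c)^2+Dy^2$, together with $Q(\lambda_z)=(a|z|^2+bx+c)^2/(4y^2)$ and $Q(\lambda_{z^\perp})=-|az^2+bz+c|^2/(4y^2)$, turns each summand into $\sqrt D-\tfrac1y|a|z|^2+bx+c|\arcsin\bigl(y\sqrt D/|az^2+bz+c|\bigr)$, and collecting the normalizing constants produces the overall factor $4$.

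For the absolute convergence, a Taylor expansion of $\arcsin$ shows that each summand is $O(|az^2+bz+c|^{-2})$ as the coefficients of the form grow, while the number of $[a,b,c]\in\mathcal{Q}_D$ with $|az^2+bz+c|\le X$ --- equivalently the number of $\lambda\in L'$ with $Q(\lambda)=-D/4$ and $Q(\lambda_z)$ bounded --- is $O(X^{1+\varepsilon})$ by the standard lattice point count for an indefinite ternary form; together these give absolute convergence. I expect the main obstacle to be the rigorous justification of the unfolding: since $\Theta_L(\tau,z)$ grows towards the real axis, one must control the interchange of the limit $T\to\infty$, the integral over the strip, and the Poincar\'e summation, and verify convergence of the unfolded $v$-integral at both ends. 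This is precisely where the non-holomorphic Whittaker seed is essential, since its vanishing $\mathcal{M}_{5/4}(\pi D v)=O(v^{3/2})$ as $v\to0$ cancels the factor $v^{-3/2}$ coming from the weight and the measure; a naive weakly holomorphic seed $q^{-D}$ would make the term-by-term integral diverge.
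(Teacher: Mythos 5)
Your proposal is correct and is essentially the paper's own argument: the paper likewise realizes $f_D$ (in its vector-valued incarnation $2f_D = F_{\beta,m}$ with $D = 4m$) as a Maass Poincar\'e series, unfolds at the spectral value $s = 1-\tfrac{k}{2} = \tfrac{5}{4}$ to obtain the general hypergeometric formula of Theorem~\ref{theorem unfolding} (deferring the unfolded $v$-integral and its convergence to Bruinier's habilitation, Theorem~2.13), and then reduces ${}_2F_1\left(1,1;\tfrac{5}{2};w^2\right)$ to the $\arcsin$ expression \eqref{eq unfolding 2} and translates coordinates via $Q(\lambda_{z}) = (a|z|^{2}+bx+c)^{2}/4y^{2}$ and $Q(\lambda_{z^{\perp}}) = -|az^{2}+bz+c|^{2}/4y^{2}$, exactly as you do. Your hypergeometric identity and constant bookkeeping check out against Theorem~\ref{theorem unfolding}, Remark~\ref{remark unfolding} and Section~\ref{section signature 12}.
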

	
	We refer the reader to Theorem~\ref{theorem unfolding} for the general result for lattices of signature $(1,n)$. We would like to remark that a direct computation shows that the image of $\Phi(f_{D},z)$ under the Maass lowering operator $L = -2iy^{2}\frac{\partial}{\partial \overline{z}}$ is a multiple of the weight $-2$ locally harmonic Maass form $\mathcal{F}_{-1,D}(z)$ investigated in \cite{bringmannkanekohnen}. 
%	Note that $\mathcal{F}_{-1,D}$ is actually locally holomorphic since there are no cusp forms of weight $4$ for $\SL_{2}(\Z)$. 
%	Conversely, $\Phi(f_{D},z)$ is a multiple of $R_{-2}\mathcal{F}_{-1,D}$, where $R_{-2} = 2i\frac{\partial}{\partial z}-2y^{-1}$ is the Maass raising operator. 
%	This implies that $\Phi(f_{D},z)$ is an eigenfunction of the invariant Laplace operator with eigenvalue $-2$, in accordance with the results of \cite{brhabil}, Section~3.1.1.
%	\end{enumerate}

To describe our second expression for $\Phi(f_{D},z)$ we require the $L$-function
\[
L_{D}(s) = L_{D_{0}}(s)\sum_{d\mid f}\mu(d)\chi_{D_{0}}(d)d^{-s}\sigma_{1-2s}\left(\frac{f}{d}\right),
\]
where we wrote $D = D_{0}f^{2}$ with a fundamental discriminant $D_{0}$ and a suitable $f \in \N$ (compare \cite{kz}, p.~188). Moreover, $L_{D_{0}}(s)$ denotes the usual Dirichlet $L$-function associated to the quadratic character $\chi_{D_{0}} = \big(\frac{D_{0}}{\cdot}\big)$, $\mu$ is the Moebius function and $\sigma_{s} = \sum_{d \mid n}d^{s}$ is a generalized divisor sum. The special values $L_{D}(1-k)$ at negative integers $1-k$ appear as Fourier coefficients of Cohen's Eisenstein series of weight $k+1/2$, compare \cite{cohen}. Using Cohen's Eisenstein series of weight $5/2$ and the residue theorem one can show that the constant coefficient of the weakly holomorphic modular form $f_{D}$ is given by $a_{f_{D}}(0) = -120L_{D}(-1)$. The following expression for $\Phi(f_{D},z)$ can be obtained by writing the theta function as a Poincar\'e type series and using the unfolding argument.

\begin{Theorem}\label{theorem fourier expansion introduction}
	The theta lift $\Phi(f_{D},z)$ has the Fourier expansion
	\begin{align*}
	\Phi(f_{D},z) &= -\frac{40\pi}{y}L_{D}(-1) -\frac{8\pi}{y}\sum_{\substack{[a,b,c] \in \mathcal{Q}_{D} \\ a|z|^{2}+bx+c > 0 > a}}(a|z|^{2}+bx+c) \\
	&\quad + \delta_{D = \square}\left(4\pi y D + \frac{2\pi}{y}\left(\mathbb{B}_{2}\left(\sqrt{D}x\right)+\mathbb{B}_{2}\left(-\sqrt{D}x\right) \right) \right),
	\end{align*}
	where $\mathbb{B}_{2}(x)$ is the one-periodic function which agrees with the Bernoulli polynomial $B_{2}(x) = x^{2}-x+\frac{1}{6}$ for $0 \leq x < 1$, and $\delta_{D = \square}$ equals $1$ or $0$ according to whether $D$ is a square or not. 
\end{Theorem}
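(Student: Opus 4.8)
The plan is to evaluate $\Phi(f_D,z)$ by unfolding the Siegel theta function against $f_D$, following Borcherds' method for regularized theta integrals (cf.\ \cite{borcherds}), rather than unfolding $f_D$ as in Theorem~\ref{theorem unfolding introduction}. I would fix the primitive isotropic vector $\ell=(0,0,1)\in L$, which represents the cusp $z\to i\infty$, together with a vector $\ell'\in L'$ satisfying $(\ell,\ell')=1$, and pass to the rank-one negative definite lattice $K=(L\cap\ell^\perp)/\Z\ell$. The splitting of $\Theta_L$ with respect to $\ell$ (a Poisson summation in the $\Z\ell$-direction) rewrites $\Theta_L(\tau,z)$ as an explicit prefactor built from $|\ell_{z^+}|$ (which is proportional to $1/y$) and powers of $v$, times a lattice sum carrying the dependence on $x=\operatorname{Re}(z)$. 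Since the pairing $(\lambda,\ell)$ equals the leading coefficient $a$ of the form $[a,b,c]\in\mathcal{Q}_D$ attached to $\lambda$, this representation both exposes the periodicity $z\mapsto z+1$ and organizes the sum according to $a$.

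Inserting this into
\[
\Phi(f_D,z)=\lim_{T\to\infty}\int_{\mathcal F_T}\langle f_D,\Theta_L\rangle\,v^{-1/2}\,\frac{du\,dv}{v^2},
\]
I would treat the vectors with $a\neq0$ by the usual unfolding, replacing the integral over $\mathcal F_T$ by one over the strip $0\le u\le1$, $v>0$; pairing with the Fourier expansion of $f_D$ and evaluating the resulting elementary $v$-integrals should yield the finite, locally polynomial main term $-\tfrac{8\pi}{y}\sum(a|z|^2+bx+c)$, where the summation region $a|z|^2+bx+c>0>a$ records exactly the sign conditions that survive the $v$-integration. Separately, the constant Fourier coefficient of $f_D$ enters through the zero vector in the split theta function (which carries the $1/y$ prefactor), producing a contribution proportional to $a_{f_D}(0)$; inserting the identity $a_{f_D}(0)=-120\,L_D(-1)$ recorded above turns this into the term $-\tfrac{40\pi}{y}L_D(-1)$.

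The terms carrying $\delta_{D=\square}$ come precisely from the vectors with $a=0$, i.e.\ from $\lambda\in\ell^\perp$: such vectors of norm $-D/4$ exist exactly when $D=b^2$ is a perfect square, in which case $b=\pm\sqrt D$ and the forms are $[0,\pm\sqrt D,c]$. For these no unfolding is available, and I would instead carry out Poisson summation (or direct evaluation) in the remaining coefficient $c\in\Z$. The zero frequency produces the non-decaying growth term $4\pi yD$, while the nonzero frequencies assemble, through the classical identity $\sum_{n\neq0}n^{-2}e^{2\pi int}=2\pi^2\,\mathbb B_2(t)$ applied with $t=\pm\sqrt D\,x$, into $\tfrac{2\pi}{y}\big(\mathbb B_2(\sqrt D x)+\mathbb B_2(-\sqrt D x)\big)$.

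The main obstacle I expect is the bookkeeping around this isotropic ($a=0$) contribution together with the regularization: one must isolate the zero-frequency term, which is responsible for the linear growth $4\pi yD$ and which forces a careful treatment of a divergent $v$-integral in the limit $T\to\infty$, from the convergent Bernoulli series, and one must track all prefactors introduced by the splitting and by Poisson summation so that the constants $-40\pi$, $-8\pi$, $4\pi$, $2\pi$ and the precise summation region $a|z|^2+bx+c>0>a$ emerge correctly. Matching the normalization $a_{f_D}(0)=-120\,L_D(-1)$—which rests on the identification of $a_{f_D}(0)$ with a coefficient of Cohen's Eisenstein series of weight $5/2$—is the other delicate point, but it is already supplied in the discussion preceding the statement.
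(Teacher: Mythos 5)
Your overall mechanism --- splitting $\Theta_{L}$ along the primitive isotropic vector $\ell$ by Poisson summation and unfolding the resulting Poincar\'e-type representation of the theta kernel against $f_{D}$ --- is indeed the mechanism underlying the expansion the paper uses (the paper quotes it from Bruinier's earlier work rather than rederiving it), and your attribution of $-\frac{40\pi}{y}L_{D}(-1)$ to the constant term of $f_{D}$ and of the $\delta_{D=\square}$ terms to the vectors orthogonal to $\ell$ is essentially right. But there is a genuine gap at the heart of the argument. The unfolded expansion is valid only near the cusp, i.e.\ for $y$ sufficiently large --- concretely for $y>\sqrt{D}$, which is \emph{exactly} the region where the wall-crossing sum over $a|z|^{2}+bx+c>0>a$ is empty. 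Your claim that this sum ``records exactly the sign conditions that survive the $v$-integration'' is not how these terms arise, and cannot be correct as stated: after Poisson summation in the $\Z\ell$-direction the coefficient $c$ has been traded for a dual frequency, the $u$-integration over the unfolded strip retains only those Fourier coefficients of $f_{D}$ whose index equals $Q(b)$ for some $b$ in the rank-one lattice $K'$ (so for $f_{D}$: the constant term, and the leading coefficient when $D$ is a square), and the surviving $v$-integrals are power- or Bessel-type integrals carrying no sign conditions in $a$ and $c$. A sum over binary quadratic forms $[a,b,c]\in\mathcal{Q}_{D}$ in which \emph{both} $a$ and $c$ run freely therefore cannot emerge term by term from this computation. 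The paper closes precisely this gap by a separate argument: it proves the expansion for $y$ large, then invokes the known local singularity type of $\Phi$ along each wall $\lambda^{\perp}$ and observes that the difference between $\Phi(f_{D},z)$ and the asserted right-hand side is real-analytic on all of $\H$ and vanishes for $y$ large, hence vanishes identically. Without this continuation-across-walls step (or an equally careful chamber-by-chamber analytic continuation of the regularized unfolding), your argument establishes the formula only in the region $y>\sqrt{D}$, where the theorem has no wall-crossing content.

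A second, smaller gap concerns $4\pi yD$. The zero Poisson frequency does not produce this term by ``direct evaluation'': it equals $y$ times the regularized theta integral of the restriction of $f_{D}$ against the definite lattice $K$, taken over the full fundamental domain. Since $\mathcal{F}$ is not a rectangle, non-resonant Fourier pairings do not integrate to zero there, so this constant cannot be computed coefficient by coefficient; the resonant pairing of the principal part $q^{-D}$ with the norm $-D/4$ vectors alone contributes $2y\lim_{T\to\infty}\mathrm{vol}(\mathcal{F}_{T})=\frac{2\pi y}{3}$, which is independent of $D$ and far from $4\pi Dy$. Evaluating this constant requires its own unfolding argument --- the paper does it by writing the input as a Maass Poincar\'e series --- and your sketch does not supply one.
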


The general version of this result can be found in Theorem~\ref{theorem fourier expansion}.

\begin{Remark}
	\begin{enumerate}
		\item The condition $a|z|^{2}+bx+c > 0 > a$ means that the geodesic $C_{Q} = \{z \in \H: a|z|^{2}+bx+c = 0\}$ is a semi-circle centered at the real line and that $z$ lies in the interior of the bounded component of $\H \setminus C_{Q}$. This implies that the sum on the right-hand side in the theorem is finite for fixed $z$, and vanishes for all $y > \sqrt{D}$. In particular, the above theorem gives a \emph{finite} expression for $\Phi(f_{D},z)$.
		\item It is easy to see from the above evaluation of $\Phi(f_{D},z)$ that the special value of the lift at a CM point $z$ of discriminant $d < 0$ is a \emph{rational} multiple of $\pi\sqrt{|d|}$.
%		\item The sum over $[a,b,c] \in \mathcal{Q}_{D}$ in the first line of the theorem reproduces the singularities of $\Phi(f_{D},z)$ along semi-circles, while the function $\mathbb{B}_{2}$ captures the singularities along vertical lines.
%		\item The formula on the right-hand side closely resembles the evaluation of the $D$-th trace of geodesic cycle integrals of certain meromorphic modular forms of weight $4$ given in \cite{abs}. We plan to investigate this connection in a sequel to this work.
	\end{enumerate}
\end{Remark}

We next describe another evaluation of $\Phi(f_{D},z)$ at CM points $z \in \H$. Note that the special points in $\Gr(L)$ precisely correspond to the CM points in $\H$. The crucial observation here is that the Siegel theta function $\Theta_{L}(\tau,z)$, evaluated at a CM point $z \in \H$, splits as a tensor product of two Siegel theta functions $\Theta_{P}(\tau)$ and $\Theta_{N}(\tau)$ associated to a positive definite one-dimensional sublattice $P$ and a negative definite two-dimensional sublattice $N$. We can then use Stokes' Theorem to evaluate the theta integral $\Phi(f_{D},z)$ in terms of the Fourier coefficients of $f_{D}(\tau)$ and the coefficients of the holomorphic part of a harmonic Maass form $\widetilde{\Theta}_{P}(\tau)$ of weight $3/2$ which maps to the unary theta function $\Theta_{P}(\tau)$ under the antilinear differential operator 
\[
\xi_{\frac{3}{2}} = 2iv^{\frac{3}{2}}\overline{\frac{\partial}{\partial \overline{\tau}}},
\]
where we wrote $\tau = u+iv \in \H$.

To simplify the exposition in the introduction, we consider the CM point $z = i$. In this case, the unary theta function $\Theta_{P}(\tau)$ is essentially equal to the usual Jacobi theta function $\theta(\tau) = \sum_{n \in \Z}q^{n^{2}}$. We let $\widetilde{\theta}(\tau)$ be a harmonic Maass form of weight $3/2$ for $\Gamma_{0}(4)$ which maps to $\theta(\tau)$ under $\xi_{3/2}$. We have the following result.

\begin{Theorem}\label{theorem splitting introduction}
	The special value of $\Phi(f_{D},z)$ at $z = i$ is given by
	\begin{align*}
	\Phi(f_{D},i) = \frac{1}{2}\sum_{\substack{d \in \Z}}a_{f_{D}}(-d)\sum_{\substack{x,y \in \Z \\ x \equiv d (2)}}a_{\widetilde{\theta}}^{+}(d-x^{2}-y^{2}),
%	 -8\pi \sum_{\substack{x,y \in \Z \\ x \equiv D (2)}}H(D-x^{2}-y^{2}).
	\end{align*}
	where $a_{f_{D}}(-d)$ and $a_{\widetilde{\theta}}^{+}(d)$ denote the coefficients of $f_{D}(\tau)$ and the holomorphic part of $\widetilde{\theta}(\tau)$, respectively.
\end{Theorem}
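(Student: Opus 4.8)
The plan is to deduce the formula from the general ``splitting'' evaluation of the lift at a special point and then to carry out the discriminant-form bookkeeping specific to $z=i$. First I would pin down the lattices. The point $z=i$ is the CM point of discriminant $-4$, corresponding to the positive line $w=\R\cdot(1,0,1)$ in $\Gr(L)$, since $(1,0,1)$ is the positive-norm vector attached to the principal form $[1,0,1]$. Hence $P=L\cap w=\Z(1,0,1)$ with $Q(n(1,0,1))=n^{2}$, so $\Theta_{P}$ has $e_{0}$-component $\sum_{n\in\Z}q^{n^{2}}=\theta(\tau)$, while $N=L\cap w^{\perp}=\Z(1,0,-1)\oplus\Z(0,1,0)$ is negative definite with $Q(x(1,0,-1)+y(0,1,0))=-(x^{2}+y^{2})$. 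Thus $N^{-}=(N,-Q)\cong\Z^{2}$ carries the form $x^{2}+y^{2}$ and $\Theta_{N^{-}}(\tau)=\sum_{x,y\in\Z}q^{x^{2}+y^{2}}$, which is the source of the inner sum. I would also record that $P\oplus N\subseteq L$ has index $2$, with glue vector $g=(1,0,0)=\tfrac12(1,0,1)+\tfrac12(1,0,-1)$, so that $g$ has half-integral $P$- and $N^{-}$-components, the $N^{-}$-part being $\tfrac12$ times the $x$-basis vector.

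Next I would run the Stokes-theorem engine underlying the general formula $\Phi(f,w)=\CT(\langle f,\overline{\widetilde{\Theta}_{P}^{+}\otimes\Theta_{N^{-}}}\rangle)$. Using $\Theta_{L}(\tau,i)=\Theta_{P}\otimes\Theta_{N}$ together with $\Theta_{N}=v\,\overline{\Theta_{N^{-}}}$ (valid since $N$ is negative definite of rank $2$), the integrand is, after collecting powers of $v$, a constant times $v^{-3/2}\sum_{\gamma}f_{\gamma}\,\overline{\Theta_{P,\gamma_{1}}}\,\Theta_{N^{-},\gamma_{2}}$. The decisive identity is $\xi_{3/2}\widetilde{\theta}=\theta$, which gives $v^{-3/2}\overline{\Theta_{P}}=-2i\,\partial_{\bar\tau}\widetilde{\Theta}_{P}$; since $f_{D}$ is weakly holomorphic and $\Theta_{N^{-}}$ is holomorphic, the integrand is then exactly $\partial_{\bar\tau}$ of the single function $G=-2i\sum_{\gamma}f_{\gamma}\,\widetilde{\Theta}_{P,\gamma_{1}}\,\Theta_{N^{-},\gamma_{2}}$. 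Stokes' theorem converts $\int_{\mathcal{F}_{T}}\partial_{\bar\tau}G\,du\,dv$ into the boundary integral $\tfrac{1}{2i}\oint_{\partial\mathcal{F}_{T}}G\,d\tau$. Because $G$ has weight $2=-\tfrac12+\tfrac32+1$, the one-form $G\,d\tau$ is $\SL_{2}(\Z)$-invariant, so the vertical sides cancel and the two halves of the bottom arc cancel under $S$, leaving only the top segment at $v=T$, whose integral extracts the $u$-constant term of $G$. Letting $T\to\infty$, the non-holomorphic part of $\widetilde{\theta}$ contributes only $O(T^{-1/2})$ (its shadow $\theta$ has weight $\tfrac12$), so only $\widetilde{\Theta}_{P}^{+}$ survives and I recover the constant-term formula.

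It then remains to translate the vector-valued constant term into the scalar double sum. Under the plus-space dictionary $f\leftrightarrow f_{D}=\sum_{n}a_{f_{D}}(n)q^{n}$ and $\widetilde{\Theta}_{P}\leftrightarrow\widetilde{\theta}$, I would expand the constant term as a sum over the cosets of $(P'/P)\times(N'/N)$ that lie in $L'/L$. Writing the $f_{D}$-index as $n=-d$, the requirement that the total $q$-exponent vanish forces the $\widetilde{\theta}^{+}$-index to equal $d-(x^{2}+y^{2})$, producing the coefficient $a_{\widetilde{\theta}}^{+}(d-x^{2}-y^{2})$ and the sum $\sum_{x,y\in\Z}$. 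The index-$2$ gluing accounts for the two remaining features: the coset of $L'/L$ carrying the coefficient of index $-d$ is determined by $d\bmod 2$, and through the glue vector — which shifts the $N^{-}$-coordinate $x$ by $\tfrac12$ — this selects exactly the vectors with $x\equiv d\pmod 2$; passing from the sum over $(P\oplus N)'$ to the sum over $L'$ (dividing by $[L:P\oplus N]=2$) yields the overall factor $\tfrac12$. Assembling these gives the asserted formula.

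The analytic part (Stokes together with the $\xi$-identity) is routine once the splitting and the relation $\Theta_{N}=v\,\overline{\Theta_{N^{-}}}$ are in place, and the weak holomorphicity of $f_{D}$ removes any $\xi f$-contribution. The genuine obstacle is the discriminant-form bookkeeping forced by $L\neq P\oplus N$: I must correctly match each coset in $L'/L$ with a pair of cosets in $(P'/P)\times(N'/N)$, track the half-integral glue components, and verify that precisely the congruence $x\equiv d\pmod 2$ and the normalizing factor $\tfrac12$ emerge. Getting these combinatorial and normalization constants right — together with pinning down the conventions relating $\Theta_{P}$ to $\theta$ and the plus-space isomorphism — is where I expect to spend the most care, rather than on the qualitative shape of the identity.
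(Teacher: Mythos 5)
Your analytic skeleton is sound, and it is in fact the paper's own route: the paper proves the general splitting formula (Theorem~\ref{theorem splitting computation}) by exactly the argument you outline --- split the theta function at the special point, write $\Theta_{N}=v\,\overline{\Theta_{N^{-}}}$, trade $v^{-3/2}\overline{\Theta_{P}}$ for $-2i\partial_{\overline{\tau}}\widetilde{\Theta}_{P}$, apply Stokes to the weight-$2$ invariant form $G\,d\tau$, and let $T\to\infty$ so that only $\widetilde{\Theta}_{P}^{+}$ survives (the $\xi f$-term being absent for weakly holomorphic input) --- and then specializes via the explicit constant-term formula of Remark~\ref{remark constant term}. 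The problem is that both of your concrete claims about the index-$2$ gluing, which you correctly identify as the crux, are wrong. First, the factor $\tfrac12$ does \emph{not} arise from ``dividing by $[L:P\oplus N]=2$''; no such division occurs anywhere. Indeed, your starting identity $\Theta_{L}(\tau,i)=\Theta_{P}\otimes\Theta_{N}$ is false because $L\neq P\oplus N$: the correct relation is $\Theta_{L}=(\Theta_{P\oplus N})^{L}$, and by adjointness of trace and restriction one replaces $f$ by $f_{P\oplus N}$ and sums over all four cosets of $L'/(P\oplus N)$, i.e.\ over the pairs $(\alpha,\beta)\in (P'/P)\times(N'/N)$ with $\alpha+\beta\in L'$, with \emph{no} index factor in front (this is Remark~\ref{remark constant term}). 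The genuine source of the $\tfrac12$ is the normalization mismatch between the vector-valued and scalar mock thetas: since $\xi_{3/2}\left[g(4\tau)\right]=\tfrac12\,(\xi_{3/2}g)(4\tau)$, the scalar form $\widetilde{\theta}$ normalized by $\xi_{3/2}\widetilde{\theta}=\theta$ is \emph{twice} the scalar incarnation of the vector-valued $\widetilde{\Theta}_{P}$ normalized by $\xi_{3/2}\widetilde{\Theta}_{P}=\Theta_{P}$, so that $a^{+}_{\widetilde{\Theta}_{P}}(\alpha,m)=\tfrac12\,a^{+}_{\widetilde{\theta}}(4m)$. (This is visible in the paper: Zagier's form has shadow $-\tfrac{1}{8\pi}\theta_{1/2,1}$ as a vector-valued form but $-\tfrac{1}{16\pi}\theta$ as a scalar form.)

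Second, your explanation of the congruence $x\equiv d\ (2)$ is backwards. The nontrivial class of $L'/L$ is generated by $\tfrac12(0,1,0)$, which lies along the $N$-direction \emph{not} touched by the glue vector $(1,0,0)=\tfrac12(1,0,1)+\tfrac12(1,0,-1)$; it is that coordinate whose parity is locked to $d$, while the glue-direction coordinate runs over \emph{both} parities, precisely because each class of $L'/L$ has two preimages in $L'/(P\oplus N)$ (for instance $\alpha=\beta=0$ and $\alpha=\tfrac12(1,0,1)+P$, $\beta=\tfrac12(1,0,-1)+N$ both lie over the trivial class). In your picture --- one pair $(\alpha,\beta)$ per class, with the glue-shifted coordinate locked to $d$ --- both parities would be fixed, and half the terms (e.g.\ those with $x$ odd and $y$ even when $d\equiv 0\ (4)$) would simply be missing. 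The stated theorem is insensitive to this mix-up only because the summand is symmetric in $x$ and $y$; a correct proof must count ``two glue pairs per class, each contributing one parity class,'' not ``divide by two.'' Note also that your two errors would partially compensate: omitting the shadow-normalization factor $\tfrac12$ and then inserting a spurious $\tfrac12$ from the index could land you on the right formula, but not by a sound argument.
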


The general formula for lattices of signature $(1,n)$ is given in Theorem~\ref{theorem splitting computation}. Note that the holomorphic part of $\widetilde{\theta}(\tau)$ is a mock theta function of weight $3/2$. For other CM points $z \in \H$ the right-hand side will involve the coefficients of a mock theta function of weight $3/2$ of higher level (depending on the discriminant of $z$). 

In the current situation, there is a canonical preimage $\widetilde{\theta}(\tau)$ of $\theta(\tau)$. Namely, Zagier \cite{zagiereisensteinseries} showed that the generating series 
	\[
	\sum_{n = 0}^{\infty}H(n)q^{n}
	\] 
	of Hurwitz class numbers $H(n)$, with $H(0) = -1/12$, is the holomorphic part of a harmonic Maass form of weight $3/2$ for $\Gamma_{0}(4)$ which maps to $-(1/16\pi)\theta(\tau)$ under $\xi_{3/2}$. In particular, Theorem~\ref{theorem splitting introduction} yields the formula
	\[
	\Phi(f_{D},i) = -80\pi L_{D}(-1)-8\pi \sum_{\substack{x,y \in \Z \\ x \equiv D (2)}}H(D-x^{2}-y^{2}).
	\]
	Here we used that $a_{f_{D}}(0) = -120L_{D}(-1)$.
%	Additionally, the coefficients of the principal part of this mock theta function as well as some coefficients of positive index of $f_{D}$ will appear on the right-hand side
Comparing this evaluation of $\Phi(f_{D},i)$ and the one from Theorem~\ref{theorem fourier expansion introduction}, we obtain the following recurrence for the Hurwitz class numbers.

\begin{Corollary}\label{corollary hurwitz introduction}
	For every discriminant $D > 0$ we have
	\begin{align*}
	\sum_{\substack{x,y \in \Z \\ x \equiv D (2)}}H(D-x^{2}-y^{2}) = - 5L_{D}(-1)+\sum_{\substack{[a,b,c] \in \mathcal{Q}_{D} \\ a+c > 0 > a}}(a+c) -\frac{1}{12}\delta_{D = \square}(6D+1).
		\end{align*}
\end{Corollary}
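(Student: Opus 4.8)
The plan is to obtain the recurrence by equating the two closed-form evaluations of the single real number $\Phi(f_D,i)$: one coming from the Fourier expansion of Theorem~\ref{theorem fourier expansion introduction}, the other from the splitting of the Siegel theta function at the CM point $z=i$ in Theorem~\ref{theorem splitting introduction}. Since both expressions compute the same quantity, their difference vanishes, and solving for the class number sum yields the stated identity. No new analytic input is needed; all the substance lies in the two theorems, and the corollary is their comparison.

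First I would specialize the Fourier expansion to $z=i$, i.e.\ $x=0$ and $y=1$, so that $|z|^{2}=1$ and each binary quadratic form contributes $a|z|^{2}+bx+c=a+c$. The indefinite sum then reduces to the finite sum over $[a,b,c]\in\mathcal{Q}_{D}$ with $a+c>0>a$. In the square case I would use $\mathbb{B}_{2}(0)=B_{2}(0)=\tfrac16$, so the two Bernoulli terms combine to $\mathbb{B}_{2}(0)+\mathbb{B}_{2}(0)=\tfrac13$ and the anomalous contribution becomes $\delta_{D=\square}\bigl(4\pi D+\tfrac{2\pi}{3}\bigr)$. This gives
\[
\Phi(f_{D},i)=-40\pi L_{D}(-1)-8\pi\!\!\sum_{\substack{[a,b,c]\in\mathcal{Q}_{D}\\ a+c>0>a}}\!\!(a+c)+\delta_{D=\square}\Bigl(4\pi D+\tfrac{2\pi}{3}\Bigr).
\]

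Second I would invoke the special-point evaluation already recorded just before the statement, namely
\[
\Phi(f_{D},i)=-80\pi L_{D}(-1)-8\pi\!\!\sum_{\substack{x,y\in\Z\\ x\equiv D\,(2)}}\!\!H(D-x^{2}-y^{2}),
\]
which rests on Zagier's harmonic Maass form with holomorphic part $\sum_{n}H(n)q^{n}$ (mapping to $-\tfrac{1}{16\pi}\theta$ under $\xi_{3/2}$) together with the identity $a_{f_{D}}(0)=-120L_{D}(-1)$. Equating the two displays, dividing through by $-8\pi$, and isolating the class number sum produces $-5L_{D}(-1)+\sum(a+c)-\delta_{D=\square}\bigl(\tfrac{D}{2}+\tfrac{1}{12}\bigr)$; finally I would rewrite $\tfrac{D}{2}+\tfrac{1}{12}=\tfrac{1}{12}(6D+1)$ to reach the claimed form.

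There is essentially no obstacle of a conceptual nature here, since both theorems already deliver the lift as a genuinely convergent, respectively finite, closed form. The only points demanding care are the bookkeeping of numerical constants — the overall factor $-8\pi$, the doubling of the $\tfrac16$ arising from the two equal Bernoulli terms at $x=0$, and the reduction $\tfrac{D}{2}+\tfrac{1}{12}=\tfrac{1}{12}(6D+1)$ — and the verification that the normalizations $a_{f_{D}}(0)=-120L_{D}(-1)$ and Zagier's $\xi_{3/2}$-normalization are used consistently across the two evaluations, so that the $L_{D}(-1)$ terms combine as $-80+40=-40$, leaving the coefficient $-5$ after division.
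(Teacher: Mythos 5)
Your proposal is correct and is exactly the paper's argument: the paper derives the corollary by comparing the splitting evaluation $\Phi(f_D,i) = -80\pi L_D(-1) - 8\pi\sum_{x\equiv D(2)} H(D-x^2-y^2)$ with the Fourier expansion of Theorem~\ref{theorem fourier expansion introduction} specialized at $z=i$, just as you do. Your bookkeeping of the constants (the value $\mathbb{B}_2(0)=\tfrac16$ doubled, division by $-8\pi$, and $\tfrac{D}{2}+\tfrac{1}{12}=\tfrac{1}{12}(6D+1)$) checks out.
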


To further illustrate the formulas that we can obtain using theta lifts for lattices of signature $(1,n)$, we give another example in signature $(1,1)$. We consider Andrews' smallest parts function $\spt(n)$, which counts the 
total number of smallest parts in the partitions of $n$. For $n \in \N_{0}$ we let 
\[
s(n) = \spt(n)+\frac{1}{12}(24n-1)p(n),
\]
with the partition function $p(n)$. Bringmann \cite{bringmannduke} showed that the generating series 
\[
q^{-1/24}\sum_{n=0}^{\infty}s(n)q^{n}
\]
is the holomorphic part of a harmonic Maass form of weight $3/2$ which maps to a linear combination of unary theta functions of weight $1/2$ under $\xi_{3/2}$, i.e., it is a mock modular form of weight $3/2$. By comparing our different evaluations of the theta lift for a certain lattice of signature $(1,1)$, we obtain the following recurrences for $s(n)$.

\begin{Proposition}
	We have
	\begin{align*}
	\sum_{r \in \Z}\left( \frac{12}{r}\right)s\left(m-\frac{r^{2}}{24}+\frac{1}{24} \right) = 4\sigma_{1}(m)-2\sum_{d \mid m}\left(\min(6d,m/d)-\min(3d,2m/d) \right)
	\end{align*}
	for all $m \in \N$.
\end{Proposition}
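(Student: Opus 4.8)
The plan is to realize both sides of the asserted identity as two different evaluations of a single regularized theta lift $\Phi(f,w)$ for a suitable even lattice $L$ of signature $(1,1)$ and a special point $w \in \Gr(L)$, exactly in the spirit of the proof of Corollary~\ref{corollary hurwitz introduction}. First I would fix $L$ together with $w$ so that the positive definite line $P = L \cap w$ and the negative definite line $N = L \cap w^{\perp}$ have the following feature: the weight $3/2$ harmonic Maass form $\widetilde{\Theta}_{P}$ attached to $P$ can be taken to be a vector valued incarnation of Bringmann's mock modular form $q^{-1/24}\sum_{n \geq 0}s(n)q^{n}$ from \cite{bringmannduke}, whose image under $\xi_{3/2}$ is the relevant linear combination of unary theta functions $\Theta_{P}$ of weight $1/2$. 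Since the $\spt$-data lives at level $24$, this forces $L$ to have a discriminant form of level $24$; concretely one arranges $N$ so that $\Theta_{N^{-}}$ becomes the weight $1/2$ theta series $\sum_{r \in \Z}\left(\frac{12}{r}\right)q^{r^{2}/24}$, with $P$ the orthogonal line producing the $\spt$-mock form. Finally I would choose the weakly holomorphic input $f$ of weight $0 = (1-n)/2$ for $\rho_{L}$ whose principal part is a single term of index $m$ (so $a_{f}(-m)=1$), the analogue of the form $f_{D}$.

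Next I would invoke the splitting evaluation, that is, the case $n=1$ of Theorem~\ref{theorem splitting computation}, to write
\[
\Phi(f,w) = \CT\left(\left\langle f, \overline{\widetilde{\Theta}_{P}^{+}\otimes \Theta_{N^{-}}}\right\rangle\right).
\]
Expanding the constant term and matching Fourier exponents, the principal part $q^{-m}$ of $f$ pairs, through the coefficients $a_{f}(-d)$, with the coefficients of $\widetilde{\Theta}_{P}^{+}$ and $\Theta_{N^{-}}$. Tracking the fractional shifts (the $q^{-1/24}$ of the $\spt$-series against the $q^{r^{2}/24}$ of $\Theta_{N^{-}}$) collapses this into $\tfrac{1}{2}\sum_{r \in \Z}\left(\frac{12}{r}\right)s\left(m-\tfrac{r^{2}}{24}+\tfrac{1}{24}\right)$ up to an explicit normalization, together with one further contribution coming from the constant coefficient $a_{f}(0)$ paired against the constant terms of the two theta series.

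In parallel I would compute $\Phi(f,w)$ through its Fourier expansion, applying the case $n=1$ of Theorem~\ref{theorem fourier expansion}. In signature $(1,1)$ the singular geodesics degenerate, so the finite sum over $\mathcal{Q}_{D}$ constrained by $a|z|^{2}+bx+c > 0 > a$ reduces to the elementary $\min$-expressions, while the Eisenstein main term produces the divisor sum $\sigma_{1}(m)$; this is exactly the mechanism behind the classical Hurwitz--Kronecker relation $2\sigma_{1}(m)-\sum\min(a,b)$ recalled in the introduction. I expect this to yield the right hand side $4\sigma_{1}(m) - 2\sum_{d \mid m}\left(\min(6d,m/d)-\min(3d,2m/d)\right)$, again up to the same normalization and the same $a_{f}(0)$-contribution. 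Equating the two evaluations and cancelling the common constant terms then gives the stated recurrence for all $m \in \N$.

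The main obstacle is the bookkeeping in the first step: one has to pin down $L$, $w$, and the decomposition of $L'/L$ into cosets so that the vector valued theta series $\Theta_{P}$ and $\Theta_{N^{-}}$ reproduce exactly the unary theta functions carrying the Kronecker symbol $\left(\frac{12}{r}\right)$ and the $\tfrac{1}{24}$-shift attached to the $\spt$-function, with multiplier systems and normalizations matched to Bringmann's form. Once this dictionary between the vector valued and the classical picture is fixed, the analytic content—convergence of the lift and the Stokes'-theorem evaluation—is supplied by Theorems~\ref{theorem splitting computation} and~\ref{theorem fourier expansion}, and what remains is the routine but delicate check that the $a_{f}(0)$ and Eisenstein terms on the two sides agree and cancel, leaving the clean $\min$- and $\sigma_{1}$-expressions.
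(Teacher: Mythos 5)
Your overall strategy---realizing both sides as two evaluations of a signature $(1,1)$ theta lift, one via Theorem~\ref{theorem splitting computation} and one elementary---is the paper's strategy, but the concrete setup you propose cannot work, and it misses the one essential trick. First, $\Theta_{N^{-}}$ is the vector-valued theta series of a one-dimensional positive definite lattice; each of its components has non-negative coefficients (they count lattice vectors), so it can never equal the sign-weighted series $\sum_{r}\left(\frac{12}{r}\right)q^{r^{2}/24}$. In the target identity the Kronecker symbol does not come from $\Theta_{N^{-}}$ at all: it comes from the components of the mock form, since Bringmann's form enters as the vector-valued object $F(\tau)(\e_{1}-\e_{5}-\e_{7}+\e_{11})$. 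Second, and more fundamentally, this vector-valued spt form is \emph{not} a $\xi_{3/2}$-preimage of any single unary theta function $\Theta_{P}=\theta_{1/2,d}$; its shadow is the Atkin--Lehner difference $-(\sqrt{6}/4\pi)\left(\theta_{1/2,6}-\theta_{1/2,6}^{\sigma_{2}}\right)$. Consequently, Theorem~\ref{theorem splitting computation} applied at any single special point $w$ produces the coefficients of some preimage $\widetilde{\theta}_{1/2,6}$, which a priori have nothing to do with $s(n)$. The paper's proof (Example~\ref{example spt}) circumvents this by taking $L$ to be the unimodular hyperbolic plane, $f = 2j_{m}+48\sigma_{1}(m)$, and applying \eqref{eq formula hyperbolic plane} at the \emph{two} special points $y=(1,6)$ and $y=(2,3)$, which differ exactly by the involution $\sigma_{2}$ entering through the index $\sigma_{y_{1}}(r)$; subtracting the two identities makes the combination $\widetilde{\theta}_{1/2,6}-\widetilde{\theta}_{1/2,6}^{\sigma_{2}}$ appear, which is a preimage of $\theta_{1/2,6}-\theta_{1/2,6}^{\sigma_{2}}$ and may therefore be replaced by the suitably scaled spt form (legitimate because, for weakly holomorphic input, the constant-term pairing is unchanged when the preimage is altered by a weakly holomorphic form, by the residue theorem). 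This produces the left-hand side with the character $\left(\frac{12}{r}\right)$, while the difference of the two min-sums gives $\min(6d,m/d)-\min(3d,2m/d)$. This two-point subtraction is the missing idea in your proposal: with a single point you can produce neither the character nor the difference of min-sums.

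A smaller point: you take the elementary evaluation from Theorem~\ref{theorem fourier expansion}, whereas the paper takes it from Theorem~\ref{theorem unfolding} (via Theorem~\ref{theorem main identity isotropic}), whose $n=1$ specialization \eqref{eq unfolding 1} directly produces $\min\left(|\lambda_{1}y_{2}|,|\lambda_{2}y_{1}|\right)$. Note that Theorem~\ref{theorem fourier expansion} is awkward at $n=1$: its first term carries the factor $1/(n-1)$, so you would at least have to rederive the constant term in this degenerate case. Finally, the term $4\sigma_{1}(m)$ does not arise from an ``Eisenstein main term'' of a Fourier expansion but from the constant coefficient $48\sigma_{1}(m)$ of the Niebur Poincar\'e series $2j_{m}+48\sigma_{1}(m)$ paired against the principal-part coefficient $s(0)=-1/12$ of the spt form; in your setup the analogous constant term of the vector-valued input $f$ would still need to be computed.
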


We refer the reader to Example~\ref{example spt} for more details on the proof of the above proposition. Similar recursions for the spt-function were derived by Ahlgren and Andersen in \cite{ahlgrenandersenspt}. 

Finally, we would like to remark that our different evaluations of the theta lift only yield recurrences for mock theta functions of weight $3/2$. However, one can use the methods of this paper to study a similar theta lift obtained by replacing $\Theta_{L}(\tau,z)$ with a modified theta function $\Theta_{L}^{*}(\tau,z)$, in order to find recurrences for mock theta functions of weight $1/2$. We will sketch this method, which yields recurrences for some of Ramanujan's classical mock theta functions, in Section~\ref{section mock theta functions}.
%We will discuss similar recurrence relations for some other mock theta functions, such as Andrews' spt-function and Ramanujan's classical mock theta functions, in Section~\ref{section applications}, using the theta lift in signature $(1,1)$.

This work is organized as follows. In Section~\ref{section preliminaries} we collect the necessary preliminaries about vector-valued harmonic Maass forms for the Weil representation, Siegel theta functions and their splittings at special points, and different models of the Grassmannian $\Gr(L)$. In Section~\ref{section theta lifts}, which is the technical heart of this work, we compute the theta lift for lattices of signature $(1,n)$ in three different ways. Finally, in Section~\ref{section applications} we discuss several applications of these different evaluations and obtain recurrences for the coefficients of some mock theta functions.

\section{Preliminaries}\label{section preliminaries}

Throughout this section, we let $L$ be an even lattice of signature $(p,q)$ with quadratic form $Q$ and associated bilinear form $(\cdot,\cdot)$. The dual lattice of $L$ will be denoted by $L'$.

\subsection{The Weil representation} Let $\C[L'/L]$ be the group ring of $L$, which is generated by the standard basis vectors $\e_{\gamma}$ for $\gamma \in L'/L$. We let $\langle \cdot ,\cdot \rangle$ be the natural inner product on $\C[L'/L]$ which is antilinear in the second variable. Let $\Mp_{2}(\Z)$ be the metaplectic double cover of $\SL_{2}(\Z)$, realized as the set of pairs $(M,\phi)$ with $M = \left(\begin{smallmatrix}a & b \\ c & d \end{smallmatrix} \right) \in \SL_{2}(\Z)$ and $\phi:\H\to\C$ a holomorphic function with $\phi^{2}(\tau) = c\tau + d$. The Weil representation $\rho_{L}$ of $\Mp_{2}(\Z)$ associated to $L$ is defined on the generators $T = \left(\left(\begin{smallmatrix}1 & 1 \\ 0 & 1 \end{smallmatrix} \right), 1 \right)$ and $S = \left(\left(\begin{smallmatrix}0 & -1 \\ 1 & 0 \end{smallmatrix} \right), \sqrt{\tau} \right)$ by 
\begin{align*}
\rho_{L}(T)\e_{\gamma} = e(Q(\gamma))\e_{\gamma},  \qquad  \rho_{L}(S)\e_{\gamma}= \frac{\sqrt{i}^{q-p}}{\sqrt{|L'/L|}}\sum_{\beta \in L'/L}e(-(\beta,\gamma))\e_{\beta},
\end{align*}
where we put $e(x) = e^{2\pi i x}$ for $x \in \C$. The Weil representation $\rho_{L^{-}}$ associated to the lattice $L^{-} = (L,-Q)$ will be called the dual Weil representation associated to $L$. 

\subsection{Operators on vector-valued modular forms}\label{section operators}

For $k \in \frac{1}{2}\Z$ we let $A_{k,L}$ be the set of all functions $f: \H \to \C$ that transform like modular forms of weight $k$ for $\rho_{L}$, which means that $f$ is invariant under the slash operator 
\[
f|_{k,L}(M,\phi)= \phi(\tau)^{-2k}\rho_{L}(M,\phi)^{-1}f(M\tau)
\]
for $(M,\phi) \in \Mp_{2}(\Z)$. Let $K \subset L$ be a sublattice of finite index. Then we have the inclusions $K \subset L \subset L' \subset K'$ and thus $L/K \subset L'/K \subset K'/K$. We have the natural map $L'/K \to L'/L, \gamma \mapsto \overline{\gamma}$. There are maps
\begin{align*}
\res_{L/K}&: A_{k,L} \to A_{k,K}, \quad f\mapsto f_{K}, \\
\tr_{L/K}&: A_{k,K} \to A_{k,L}, \quad g\mapsto g^{L},
\end{align*}
which are defined for $f \in A_{k,L}$ and $\gamma \in K'/K$ by
\begin{align*}
(f_{K})_{\gamma} = \begin{cases} f_{\overline{\gamma}}, & \text{if } \gamma \in L'/K, \\ 0, & \text{if } \gamma \notin L'/K,\end{cases}
\end{align*}
and for $g \in A_{k,K}$ and $\overline{\gamma} \in L'/L$ by
\begin{align*}
(g^{L})_{\overline{\gamma}} = \sum_{\beta \in L/K}g_{\beta+\gamma}.
\end{align*}
They are adjoint with respect to the inner products on $\C[L'/L]$ and $\C[K'/K]$. We refer the reader to \cite{bruinierconverse}, Section~3, or \cite{scheithauer}, Section~4, for more details.

\subsection{Harmonic Maass forms}\label{section harmonic maass forms}

Recall from \cite{bruinierfunke04} that a harmonic Maass form of weight $k \in \frac{1}{2}\Z$ for $\rho_{L}$ is a smooth function $f: \H \to \C$ which is annihilated by the weight $k$ Laplace operator 
\[
\Delta_{k} = -v^{2}\left(\frac{\partial^{2}}{\partial u^{2}}+\frac{\partial^{2}}{\partial v^{2}} \right)+ikv\left(\frac{\partial}{\partial u}+i\frac{\partial}{\partial v} \right), \qquad (\tau = u + iv \in \H),
\]
which transforms like a modular form of weight $k$ for $\rho_{L}$, and which is at most of linear exponential growth at the cusp $\infty$. The space of harmonic Maass forms of weight $k$ for $\rho_{L}$ is denoted by $H_{k,L}$. We let $M_{k,L}^{!}$ be subspace of weakly holomorphic modular forms, which consists of the forms that are holomorphic on $\H$. The antilinear differential operator
\[
\xi_{k} = 2iv^{k}\overline{\frac{\partial}{\partial \overline{\tau}}}
\]
maps $H_{k,L}$ onto $M_{2-k,L^{-}}^{!}$. We let $H_{k,L}^{\hol}$ and $H_{k,L}^{\cusp}$ be the subspaces of $H_{k,L}$ which are mapped to the space $M_{2-k,L^{-}}$ of holomorphic modular forms or the space $S_{2-k,L^{-}}$ of cusp forms under $\xi_{k}$, respectively. For $k \leq 0$ every $f \in H_{k,L}^{\hol}$ decomposes as a sum $f = f^{+} + f^{-}$ of a holomorphic and a non-holomorphic part, having Fourier expansions of the form
\begin{align}\label{eq Fourier expansion}
\begin{split}
f^{+}(\tau) &= \sum_{\gamma \in L'/L}\sum_{\substack{n \in \Q \\ n \gg -\infty}}a_{f}^{+}(\gamma,n)q^{n}\e_{\gamma}, \\
f^{-}(\tau) &=  \sum_{\gamma \in L'/L}\bigg(a_{f}^{-}(\gamma,0)v^{1-k}+\sum_{\substack{n \in \Q \\ n < 0}}a_{f}^{-}(\gamma,n)\Gamma(1-k,4\pi|n|v)q^{n}\bigg)\e_{\gamma},
\end{split}
\end{align}
where $a_{f}^{\pm}(\gamma,n) \in \C$, $q = e^{2\pi i \tau}$, and $\Gamma(s,x) = \int_{x}^{\infty}e^{-t}t^{s-1}dt$ is the incomplete Gamma function. Note that $f \in H_{k,L}^{\cusp}$ is equivalent to $a_{f}^{-}(\gamma,0) = 0$ for all $\gamma \in L'/L$.

\subsection{Maass Poincar\'e series}\label{section poincare series} Examples of harmonic Maass forms of weight $k \leq 0$ can be constructed using Maass Poincar\'e series, compare \cite{brhabil}, Section~1.3. For $s \in \C$ and $v > 0$ we let
\[
\mathcal{M}_{s}(v) = v^{-k/2}M_{-k/2,s-1/2}(v),
\]
with the usual $M$-Whittaker function. For $\beta \in L'/L$ and $m \in \Z - Q(\beta)$ with $m > 0$, and $s \in \C$ with $\Re(s) > 1$ we define the Maass Poincar\'e series
\[
F_{\beta,m}(\tau,s) = \frac{1}{2\Gamma(2s)}\sum_{(M,\phi) \in \widetilde{\Gamma}_{\infty}\backslash \Mp_{2}(\Z)}\mathcal{M}_{s}(4\pi m v)e(-mu)\e_{\beta}|_{k,L}(M,\phi),
\]
where $\widetilde{\Gamma}_{\infty}$ is the subgroup of $\Mp_{2}(\Z)$ generated by $T=\left(\left(\begin{smallmatrix}1 & 1 \\ 0 & 1 \end{smallmatrix} \right), 1 \right)$. It converges absolutely for $\Re(s) > 1$, it transforms like a modular form of weight $k$ for $\rho_{L}$, and it is an eigenform of the Laplace operator $\Delta_{k}$ with eigenvalue $s(1-s)+(k^{2}-2k)/4$. For $k = 0$ the Maass Poincar\'e series is also called a Niebur Poincar\'e series \cite{niebur}. It has a meromorphic continuation in $s$ to $\C$ which is obtained from its Fourier expansion and which is holomorphic at $s = 1$. For all $k \leq 0$ the special value
\[
F_{\beta,m}(\tau) = F_{\beta,m}\left(\tau,1-\frac{k}{2}\right)
\]
defines a harmonic Maass form in $H_{k,L}^{\cusp}$ whose Fourier expansion starts with
\[
F_{\beta,m}(\tau) = q^{-m}(\e_{\beta}+\e_{-\beta}) + O(1).
\]
In particular, for $k\leq 0$ every harmonic Maass form $f \in H_{k,L}^{\cusp}$ with Fourier expansion as in \eqref{eq Fourier expansion} can be written as a linear combination
\begin{align}\label{eq linear combination Maass Poincare}
f(\tau) = \frac{1}{2}\sum_{\beta \in L'/L}\sum_{m > 0}a_{f}^{+}(\beta,-m)F_{\beta,m}(\tau) + \mathfrak{c}
\end{align}
of Maass Poincar\'e series and a constant $\mathfrak{c} \in \C[L'/L]$, which may be non-zero only if $k = 0$.

\subsection{Siegel theta functions and special points}\label{section theta functions}

As before, we let $L$ be an even lattice of signature $(p,q)$. Let $\Gr(L)$ be the Grassmannian of positive definite $p$-dimensional subspaces of $V = L \otimes \R$. 
%For $\lambda \in V(\R)$ we call $|\lambda| = \sqrt{|(\lambda,\lambda)|}$ the norm of $\lambda$. Fix some non-zero isotropic vector $\ell \in V(\R)$. The set of norm one vectors in $V(\R)$ has two connected components, one of which is given by
%\[
%V_{1}(\R) = \{\lambda \in V(\R): |\lambda| = 1, \, (\lambda,\ell) > 0\}.
%\]
%The Grassmannian $\Gr(L)$ can be identified with $V_{1}(\R)$ by mapping $z \in \Gr(L)$ to the norm one vector $\ell_{z}/|\ell_{z}| \in V_{1}(\R)$, where $\lambda_{z}$ denotes the orthogonal projection of $\lambda$ to $z$. We say that $\lambda \in V(\R)$ with $Q(\lambda) \neq 0$ is positively oriented if $(\lambda,\ell) > 0$.
The Siegel theta function associated to $L$ is defined by
\begin{align*}
\Theta_{L}(\tau,z) = v^{q/2}\sum_{\lambda \in L'}e(Q(\lambda_{z})\tau+Q(\lambda_{z^{\perp}})\overline{\tau})\e_{\lambda + L},
\end{align*}
where $\tau = u + iv \in \H$ and $z \in \Gr(L)$, and $\lambda_{z}$ denotes the orthogonal projection of $\lambda$ to $z$. 
%We also consider the modified Siegel theta function
%\[
%\Theta_{L}^{*}(\tau,z) = v^{3/2}\sum_{\lambda \in L'}\left(\lambda,\frac{\ell_{z}}{|\ell_{z}|}\right)\left(\lambda,\frac{\ell_{z^{\perp}}}{|\ell_{z^{\perp}}|}\right)e(Q(\lambda_{z})\tau+Q(\lambda_{z^{\perp}})\overline{\tau})\e_{\lambda + L}.
%\]
The Siegel theta function transforms like a modular form of weight $\frac{p-q}{2}$ for $\rho_{L}$ in $\tau$ (see \cite{borcherds}, Theorem~4.1) and is invariant in $z$ under the subgroup of $\O(L)$ fixing $L'/L$.

\begin{Definition}
We call $w\in \Gr(L)$ a \emph{special point} if it is defined over $\Q$, that is, $w \in L \otimes \Q$.
\end{Definition}

For a special point $w \in \Gr(L)$ its orthogonal complement $w^{\perp}$ in $V$ is also defined over $\Q$ and we obtain the rational splitting $L \otimes \Q = w \oplus w^{\perp}$ which yields the positive and negative definite lattices
\[
P = L \cap w, \qquad N = L \cap w^{\perp}.
\]
%We let $2d_{P}$ and $2d_{N}$ be the discriminants of $P$ and $N$. In particular, if $P = \Z y$ for some $y \in L$ (which is a generator of $w$), then $d_{P} = Q(y)$. We fix isomorphisms $P \cong \Z(d_{P})$ and $N^{-} \cong \Z(d_{N})$ by sending the positively oriented generators of $N^{-}$ and $P$ to $1$.
%
Note that $P \oplus N$ is a sublattice of $L$ of finite index. The Siegel theta functions associated to $L$ and $P \oplus N$ are related by
\begin{align}\label{eq theta relation}
\Theta_{L} = \left(\Theta_{P\oplus N}\right)^{L},
\end{align}
with the trace operator defined in Section~\ref{section operators}. Moreover, at the special point $w$ the Siegel theta function associated to $P \oplus N$ splits as a tensor product
%\begin{align}\label{splitting siegel theta}
%\Theta_{P \oplus N}(\tau,w) = \Theta_{P}(\tau) \otimes \Theta_{N}(\tau),
%\end{align}
\begin{align}\label{splitting siegel theta}
\Theta_{P \oplus N}(\tau,w) = \Theta_{P}(\tau) \otimes \Theta_{N}(\tau),
\end{align}
where we identified $\C[(P\oplus N)'/(P\oplus N)]$ with $\C[P'/P] \otimes \C[N'/N]$. These two observations can be proved by direct calculations using the definition of the Siegel theta function.

\subsection{Unary theta series and Atkin-Lehner operators}
\label{section unary theta series}

For $d \in \N$ the one-dimensional positive definite lattice $\Z(d) = (\Z,dx^{2})$ has level $4d$ and its discriminant group is isomorphic to $\Z/2d\Z$ with the $\Q/\Z$-valued quadratic form $x \mapsto x^{2}/4d$. The unary theta series
\[
\theta_{1/2,d}(\tau) = \sum_{r(2d)}\sum_{\substack{n \in \Z \\ n \equiv r (2d)}}q^{n^{2}/4d}\e_{r}, \qquad \theta_{3/2,d}(\tau) = \sum_{r(2d)}\sum_{\substack{n \in \Z \\ n \equiv r (2d)}}n q^{n^{2}/4d}\e_{r},
\]
are holomorphic modular forms for the Weil representation associated to $\Z(d)$ of weight $1/2$ and $3/2$, respectively.

For an exact divisor $c \mid \mid d$ the Atkin-Lehner involution $\sigma_{c}$ on $\Z/2d\Z$ is the map defined by the equations $\sigma_{c}(x) \equiv -x (2c)$ and $\sigma_{c}(x) \equiv x (2d/c)$. The Atkin-Lehner involutions act on vector-valued modular forms for the Weil representation associated to $\Z(d)$ by 
\[
\bigg(\sum_{r (2d)}f_{r}(\tau)\e_{r} \bigg)^{\sigma_{c}} = \sum_{r (2d)}f_{\sigma_{c}(r)}(\tau)\e_{r}.
\]

\subsection{The upper half-space model of $\Gr(L)$} 
\label{section half-space}

Let $L$ be a lattice of signature $(1,n)$ with $n \geq 1$. In this case, the Grassmannian $\Gr(L)$ can be identified with hyperbolic $n$-space, as we now explain. For simplicity, we assume that $L$ contains two primitive isotropic vectors $\ell$ and $\ell'$ with $(\ell,\ell') = 1$, that is, $L$ splits a hyperbolic plane over $\Z$. We consider the negative definite lattice
\[
K = L \cap \ell^{\perp} \cap \ell'^{\perp}.
\]
Note that $L'/L \cong K'/K$. We can identify $\Gr(L)$ with one of the two connected components of the set of norm $1$ vectors in $V = L \otimes \R$. We choose the component
\[
V_{1} = \{v_{1} \in V: |v_{1}| = 1, (v_{1},\ell) > 0\}
\]
and identify $V_{1}$ with the Grassmannian by mapping $z \in \Gr(L)$ to 
\[
v_{1} = \frac{\ell_{z}}{|\ell_{z}|} \in V_{1}.
\]
This is called the \emph{hyperboloid model} of hyperbolic space. Furthermore, we can identify $V_{1}$ with the \emph{upper half-space model}
\[
\H^{n}= (K\otimes \R) \times \R_{>0}= \{(x,y): x \in K \otimes \R, y > 0\}
\]
by mapping $(x,y) \in \H^{n}$ to the norm $1$ vector
\[
v_{1} = \frac{1}{\sqrt{2}y}x + \frac{1}{\sqrt{2}y}\ell' + \left( \frac{y}{\sqrt{2}}-\frac{Q(x)}{\sqrt{2}y}\right)\ell \in V_{1}.
\]
Note that we can write $\lambda \in V$ as $\lambda = b+a\ell' + c\ell$ with $ b\in K \otimes \R$ and $a,c \in \R$. Then $Q(\lambda) = Q(b) + ac$. In the hyperboloid model and the upper half-space model the quantity $Q(\lambda_{z})$ is given by
\begin{align}\label{eq Qlambdaz}
Q(\lambda_{z}) = \frac{1}{2}(\lambda,v_{1})^{2} = \frac{1}{4y^{2}}(a(y^{2}-Q(x)))+(b,x)+c)^{2}.
\end{align}
This can be used to translate the formulas from Theorem~\ref{theorem unfolding} and Theorem~\ref{theorem fourier expansion} into the corresponding statements in the introduction.

\section{The regularized theta lift}\label{section theta lifts}

Let $L$ be a Lorentzian lattice, that is, an even lattice of signature $(1,n)$ with $n\geq 1$. Throughout, we let $k = (1-n)/2$ and we let $f \in H_{k,L}^{\cusp}$ be a harmonic Maass form with Fourier expansion as in \eqref{eq Fourier expansion}. Following Borcherds \cite{borcherds}, for $z \in \Gr(L)$ we define the regularized theta lift
\begin{align}\label{eq definition theta lift}
\Phi(f,z) = \lim_{T \to \infty}\int_{\mathcal{F}_{T}}\langle f(\tau),\Theta_{L}(\tau,z)\rangle v^{k}\frac{du dv}{v^{2}},
\end{align}
where $\mathcal{F}_{T}$ denotes the standard fundamental domain for $\SL_{2}(\Z)$, cut off at height $v = T$. By the results of \cite{borcherds}, Section~10, and \cite{brhabil}, Chapter~3, the theta lift is real-analytic up to singularities along the union of the sets
\[
H(\beta,m) = \bigcup_{\substack{\lambda \in L + \beta \\ Q(\lambda) = -m}}\lambda^{\perp} 
\]
for those $\beta \in L'/L$ and $m > 0$ with $a_{f}^{+}(\beta,-m) \neq 0$. Here $\lambda^{\perp}$ denotes the set of all $z \in \Gr(L)$ perpendicular to $\lambda$. However, $\Phi(f,z)$ is continuous on all of $\Gr(L)$.

%We can identify $\Gr(L)$ with one of the two connected components of the set of norm $1$ vectors in $V=L\otimes \R$ by choosing a suitable generator of each line $z \in \Gr(L)$. Let us denote this component by $V_{1}$. It turns out that the theta lift on a Lorentzian lattice, viewed as a function on $V_{1}$, is the restriction of a continuous and piecewise linear function on $V$, compare \cite{borcherds}, Theorem 10.3.

In this section we evaluate the regularized theta lift of $f \in H_{k,L}^{\cusp}$ in three ways. For simplicity, we assume throughout that the constant $\mathfrak{c}$ in \eqref{eq linear combination Maass Poincare} vanishes also in the case $n = 1$.
\begin{enumerate}
	\item We write $f$ as a linear combination of Maass Poincar\'e series $F_{\beta,m}$ as in \eqref{eq linear combination Maass Poincare} and use the unfolding argument to compute $\Phi(F_{\beta,m},z)$. This yields an invariant representation of $\Phi(f,z)$ as an infinite series. 
	\item By unfolding against the theta function and using the shape of the singularities of the theta lift, one can compute the Fourier expansion of $\Phi(f,z)$ on all of $\Gr(L)$.
	\item At special points $w \in \Gr(L)$ we can compute $\Phi(f,w)$ by using the splitting \eqref{splitting siegel theta} of the Siegel theta function and applying Stokes' theorem. We obtain a formula for $\Phi(f,w)$ involving the coefficients of a mock theta function of weight $3/2$. 
\end{enumerate}

%Note that we could just plug in $s = 1-\frac{k}{2}$,
%but the definition in \eqref{def theta lift} involving the parameter $s$ is useful to apply the unfolding argument in the case $\ell = 1$. \textbf{MS: Need to check this at some point, just to be shure.}

\subsection{An invariant representation of the theta lift}

Let $k = (1-n)/2$ as before. We have the following representation of $\Phi(f,z)$ as an infinite series.

\begin{Theorem}\label{theorem unfolding} For $f \in H_{k,L}^{\cusp}$ and $z \in \Gr(L)$ we have
\begin{align*}
		\Phi(f,z) &= \frac{2\sqrt{\pi}\Gamma\left(\frac{n}{2}\right)}{\Gamma\left(\frac{n+3}{2}\right)}\sum_{\substack{\lambda \in L' \\ Q(\lambda) < 0}}a_{f}^{+}(\lambda ,Q(\lambda))\frac{|Q(\lambda)|^{\frac{n+1}{2}}}{ |Q(\lambda_{z^{\perp}})|^{\frac{n}{2}} } \ _{2}F_{1}\left(\frac{n}{2},1,\frac{n+3}{2};\frac{Q(\lambda)}{Q(\lambda_{z^{\perp}})}\right),
	\end{align*}
	where $_{2}F_{1}(a,b,c;x)$ denotes Gauss' hypergeometric function. The series on the right-hand side converges absolutely.
\end{Theorem}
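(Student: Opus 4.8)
\emph{Proof idea.} The plan is to reduce the assertion to a single unfolding computation for the Maass Poincar\'e series and then to evaluate the resulting archimedean integral in closed form.

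First I would use linearity. Since $f \in H_{k,L}^{\cusp}$ and the constant $\mathfrak{c}$ is assumed to vanish, the decomposition \eqref{eq linear combination Maass Poincare} writes $f = \frac{1}{2}\sum_{\beta \in L'/L}\sum_{m>0}a_{f}^{+}(\beta,-m)F_{\beta,m}$ as a \emph{finite} linear combination, because the principal part of $f$ is finite. As $\Phi(\cdot,z)$ is linear in its first argument, it suffices to compute $\Phi(F_{\beta,m},z)$ for one Poincar\'e series and then resum. I would carry out the computation for $\Re(s)$ large, where everything converges absolutely, and specialize to the special value $s = 1-k/2 = (n+3)/4$ at the end by analytic continuation.

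Next comes the unfolding. Inserting the group-sum definition of $F_{\beta,m}(\tau,s)$ into the truncated integral over $\mathcal{F}_{T}$ and using that the full integrand $\langle\,\cdot\,,\Theta_{L}(\tau,z)\rangle v^{k}\frac{du\,dv}{v^{2}}$ is invariant under the action of $\Mp_{2}(\Z)$ on $\tau$ (as in \cite{borcherds}, since $\Theta_{L}$ has weight $(1-n)/2 = k$ for $\rho_{L}$ and the factor $v^{k}$ cancels the automorphy factor), the sum over $\widetilde{\Gamma}_{\infty}\backslash\Mp_{2}(\Z)$ unfolds the integral to $\widetilde{\Gamma}_{\infty}\backslash\H$, that is, to the strip $0\le u<1$, $v>0$. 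Here one must keep track of the metaplectic structure: the kernel of the map $\Mp_{2}(\Z)\to\operatorname{PSL}_{2}(\Z)$ is the order-four group $\langle S^{2}\rangle$, and it meets $\widetilde{\Gamma}_{\infty}=\langle T\rangle$ trivially, so the unfolding carries a multiplicity which, after accounting for the phases of $\rho_{L}(S^{2})$, produces both $\e_{\beta}$- and $\e_{-\beta}$-seeds and supplies the overall constant. Carrying out the $u$-integral against $\langle\e_{\beta},\Theta_{L}\rangle = \overline{(\Theta_{L})_{\beta}}$ and using $\int_{0}^{1}e(-mu)e(-Q(\lambda)u)\,du = \delta_{Q(\lambda)=-m}$ leaves only the $\lambda \in \beta+L$ (resp.\ $\lambda\in -\beta+L$) with $Q(\lambda)=-m$.

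Then I would evaluate the $v$-integral. Since $q = n$ and $k = (1-n)/2$, the power of $v$ outside the Whittaker function and the exponential is $v^{q/2+k-2} = v^{-3/2}$, and from $Q(\lambda_{z})+Q(\lambda_{z^{\perp}}) = Q(\lambda) = -m$ the exponential decay rate equals $Q(\lambda_{z})-Q(\lambda_{z^{\perp}}) = 2|Q(\lambda_{z^{\perp}})|-m$. Writing $\mathcal{M}_{s}(4\pi m v) = (4\pi m v)^{-k/2}M_{-k/2,s-1/2}(4\pi m v)$, the remaining integral is a Laplace transform of an $M$-Whittaker function, which a standard table integral evaluates to a Gauss hypergeometric function. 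At $s = (n+3)/4$ the Whittaker and power parameters collapse so that the hypergeometric parameters become $\left(1,\tfrac{n}{2};\tfrac{n+3}{2}\right)$ and its argument becomes $\frac{m}{|Q(\lambda_{z^{\perp}})|} = \frac{Q(\lambda)}{Q(\lambda_{z^{\perp}})}$, while the $\Gamma$-factors and powers of $4\pi$ combine to $\frac{\sqrt{\pi}\,\Gamma(n/2)}{\Gamma((n+3)/2)}|Q(\lambda)|^{(n+1)/2}|Q(\lambda_{z^{\perp}})|^{-n/2}$. Resumming over $\beta$, $m$ and $\lambda$, using $a_{f}^{+}(\beta,-m) = a_{f}^{+}(-\beta,-m)$ together with $\bigsqcup_{\beta}(\beta+L) = L'$, turns the double sum into a single sum over $\lambda \in L'$ with $Q(\lambda)<0$, and the bookkeeping of the factor $\tfrac12$ against the metaplectic multiplicity produces exactly the constant $\frac{2\sqrt{\pi}\,\Gamma(n/2)}{\Gamma((n+3)/2)}$.

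Two points then remain. The equality of $\Phi(F_{\beta,m},z)$, defined via the meromorphic continuation of $F_{\beta,m}$ to $s=(n+3)/4$, with the continuation of the unfolded expression must be justified, together with the interchange of summation and integration and the removal of the truncation $T\to\infty$; this is where the convergence of the lift in signature $(1,n)$ and the growth estimates of \cite{borcherds,brhabil} enter. Absolute convergence of the final series follows by estimating the summand: as $\lambda$ runs through the hyperboloid $Q(\lambda)=-m$ one has $|Q(\lambda_{z^{\perp}})|\to\infty$, the hypergeometric factor tends to ${}_{2}F_{1}(\cdots;0)=1$ and stays bounded (it converges even at argument $1$, since $\tfrac{n+3}{2}-1-\tfrac{n}{2}=\tfrac12>0$), so the summand decays like $|Q(\lambda_{z^{\perp}})|^{-n/2}$, which is summable against the lattice-point count on the hyperboloid. \textbf{The main obstacle} is the rigorous unfolding of the regularized integral: the careful bookkeeping of the metaplectic double cover (needed to pin down the exact constant and the $\pm$-symmetrization) and the justification of the analytic continuation in $s$ and of the limit $T\to\infty$. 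By comparison, the Whittaker-function integral producing the ${}_{2}F_{1}$ is essentially a table lookup.
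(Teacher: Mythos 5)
Your proposal is correct and follows essentially the same route as the paper: decompose $f$ into Maass Poincar\'e series $F_{\beta,m}$, unfold against the Poincar\'e series for $\Re(s)$ large, evaluate the resulting Whittaker--Laplace integral as a Gauss hypergeometric function, and specialize at $s = 1-k/2 = (n+3)/4$. The only difference is one of bookkeeping: the paper outsources the unfolding computation and the convergence for $k<0$ to \cite{brhabil}, Theorem~2.13 (handling $k=0$ via the explicit finite/unit-group formulas of Theorems~\ref{theorem main identity isotropic} and \ref{theorem main identity anisotropic}), whereas you sketch carrying out the integral and the lattice-point convergence estimate directly.
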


%\begin{Proposition}\label{proposition unfolding} For $z \in \Gr(L)$ we have the evaluation 
%	\begin{align*}
%		\Phi_{\beta,m}(z) &= 2(4\pi m)^{\frac{n+1}{2}}\frac{\Gamma\left(\frac{n}{2}\right)}{\Gamma\left(\frac{n+3}{2}\right)}\sum_{\substack{\lambda \in L + \beta \\ Q(\lambda) = -m}}(4\pi |Q(\lambda_{z^{\perp}})|)^{-\frac{n}{2}}  \ _{2}F_{1}\left(\frac{n}{2},1,\frac{n+3}{2};\frac{m}{|Q(\lambda_{z^{\perp}})|}\right),
%	\end{align*}
%	where $_{2}F_{1}(a,b,c;x)$ denotes Gauss' hypergeometric function. The series on the right-hand side converges absolutely.
%\end{Proposition}

\begin{Remark}\label{remark unfolding}
	The hypergeometric function appearing in Theorem~\ref{theorem unfolding} can be evaluated in terms of more elementary functions for small values of $n$. For example, for $n = 1$ and $f \in H_{0,L}^{\cusp}$ we find
	\begin{align}\label{eq unfolding 1}
	\Phi(f,z) = 4\pi \sum_{\substack{\lambda \in L' \\ Q(\lambda) < 0}}a_{f}^{+}(\lambda,Q(\lambda))\left(\sqrt{|Q(\lambda_{z^{\perp}})|}-\sqrt{|Q(\lambda_{z})|} \right),
	\end{align}
	and for $n = 2$ and $f \in H_{-1/2,L}^{\cusp}$ we obtain
	\begin{align}\label{eq unfolding 2} 
	\Phi(f,z) = 8\sum_{\substack{\lambda \in L' \\ Q(\lambda) < 0}}a_{f}^{+}(\lambda,Q(\lambda))\left(\sqrt{|Q(\lambda)|}-\sqrt{Q(\lambda_{z})}\arcsin\left( \sqrt{\frac{Q(\lambda)}{Q(\lambda_{z^{\perp}})}}\right) \right).
	\end{align}
	Using $Q(\lambda_{z^{\perp}}) = Q(\lambda)-Q(\lambda_{z})$ together with \eqref{eq Qlambdaz} one can translate the above series representation of $\Phi(f,z)$ into the upper half-space model, which yields Theorem~\ref{theorem unfolding introduction}.
\end{Remark}

\begin{proof}[Proof of Theorem~\ref{theorem unfolding}]
	Write $f$ as a linear combination of Maass Poincar\'e series $F_{\beta,m}$ as in \eqref{eq linear combination Maass Poincare}. Recall that we assume $\mathfrak{c} = 0$ also in the case $n = 1$. We have
	\[ 
	\Phi(F_{\beta,m},z) = \big[\Phi(F_{\beta,m}(\cdot,s),z)\big]_{s = 1-\frac{k}{2}}.
	\]
	As in \cite{brhabil}, Theorem 2.13, we can compute $\Phi(F_{\beta,m}(\cdot,s),z)$ by unfolding against $F_{\beta,m}(\tau,s)$ and get
	\begin{align*}
	\Phi(F_{\beta,m}(\cdot,s),z) &= 2(4\pi m)^{s-\frac{k}{2}}\frac{\Gamma\left(s+\frac{n-3}{4}\right)}{\Gamma(2s)}\\
	&\quad \times\sum_{\substack{\lambda \in L + \beta \\ Q(\lambda) = -m}}(4\pi |Q(\lambda_{z^{\perp}})|)^{\frac{3-n}{4}-s}\ _{2}F_{1}\left(s+\frac{n-3}{4},s-\frac{n-1}{4},2s;\frac{m}{|Q(\lambda_{z^{\perp}})|}\right).
	\end{align*}
	We plug in $s = 1-k/2 = (3+n)/4$ to obtain the stated formula.
	
	For $k < 0$ the absolute convergence of the series on the right-hand side of the theorem is contained in \cite{brhabil}, Theorem~2.13, and for $k = 0$, that is, signature $(1,1)$, it follows from the more explicit formulas for the series given in Theorem~\ref{theorem main identity isotropic} and Theorem~\ref{theorem main identity anisotropic} (together with Remark~\ref{remark anisotropic}) below.
%	\begin{align*}
%	\Phi_{\beta,m}(z) = & \frac{8\pi m^{1-k/2}}{(1-k)\Gamma(1-k/2)}\sum_{\substack{\lambda \in L+\beta \\ Q(\lambda) = -m}}|Q(\lambda_{z^{\perp}})|^{(k-1)/2}\frac{\left(1-\sqrt{1-m/|Q(\lambda_{z^{\perp}})|}\right)^{1-k}}{(m/|Q(\lambda_{z^{\perp}})|)^{1-k}} \\
%	&= \frac{8\pi m^{k/2}}{(1-k)\Gamma(1-k/2)}\sum_{\substack{\lambda \in L+\beta \\ Q(\lambda) = -m}}\left(\sqrt{|Q(\lambda_{z^{\perp}})|}-\sqrt{|Q(\lambda_{z})|}\right)^{1-k}.
%	\end{align*}
\end{proof}

\subsection{The Fourier expansion of the theta lift} In order to state the Fourier expansion of the theta lift we need to assume that $L$ contains a primitive isotropic vector $\ell$. Moreover, to simplify the exposition, we further assume in this section that $L$ contains another primitive isotropic vector $\ell'$ with $(\ell,\ell') = 1$ and a negative definite sublattice $K$ of rank $n-1$ such that
\[
L = (\Z \ell \oplus \Z \ell') \oplus^{\perp} K
\] 
as an orthogonal direct sum. In other words, we assume that $L$ splits a hyperbolic plane over $\Z$. Note that in this case we have $L'/L \cong K'/K$. 

%To state the Fourier expansion of the theta lift it is convenient to use the upper half-space model of the Grassmannian $\Gr(L)$.
%We can write $\lambda \in V$ as $\lambda = b + a\ell' + c\ell$ with $b \in K \otimes \R$ and $a,c \in \R$, and we denote this by $\lambda = [a,b,c]$ for brevity.
It is convenient now to use the upper half-space model of $\Gr(L)$, compare Section~\ref{section half-space}. The theta lift $\Phi(f,(x,y))$, viewed as a function on the upper half-space $\H^{n}$, has the following Fourier expansion. 

\begin{Theorem}\label{theorem fourier expansion}
	For $f \in H_{k,L}^{\cusp}$ and $(x,y) \in \H^{n}$ we have
	\begin{align*}
	\Phi(f,(x,y)) &= \frac{8\pi y}{(n-1)}\sum_{b \in K'}a_{f}^{+}(b,Q(b))|Q(b)|+\frac{2\pi}{y}\sum_{b \in K'}a_{f}^{+}(b,Q(b))\mathbb{B}_{2}\left((b,x)\right) \\
	&\quad +2\sqrt{2}\left(\sqrt{2}\pi y\right)^{-k}\sum_{b \in K'\setminus \{0\}}a_{f}^{-}(b,Q(b))|b|^{1-k}\sum_{n =1}^{\infty}\frac{e(n(b,x))}{n^{k+1}}K_{1-k}\left(2\sqrt{2}\pi y |b|\right)\\
	&\quad -\frac{8\pi}{y}\!\!\!\!\!\!\!\!\!\! \sum_{\substack{a,c \in \Z, b \in K' \\ Q(b)+ac < 0\\ a(y^{2}-Q(x))+(b,x)+c > 0 > a}}\!\!\!\!\!\!\!\!\!\! a_{f}^{+}(b,Q(b)+ac)\left(a(y^{2}-Q(x))+(b,x)+c \right),
%	\rho_{\ell} &= - \rho_{\ell'}Q(\ell') + \frac{1}{4}\sum_{\lambda \in K'}\sum_{\substack{\delta \in M'/M \\ \delta|L = \lambda}}a_{f}(\lambda,Q(\lambda))\mathbb{B}_{2}((\delta,\ell')).
	\end{align*}
%	\begin{align*}
%	\Phi(f,(x,y)) &= \frac{8\pi}{(n-1)y}\left(y^{2}-Q(x)\right)\sum_{b \in K'}a_{f}^{+}(b,Q(b))|Q(b)|\\
%	&\quad + \frac{2\pi}{y}\sum_{b \in K'}a_{f}^{+}(b,Q(b))\left(\mathbb{B}_{2}\left((b,x)\right)-(b,x)^{2}\right) \\
%	&\quad -\frac{8\pi}{y}\!\!\!\!\!\!\!\!\!\! \sum_{\substack{a,c \in \Z, b \in K' \\ Q(b)+ac < 0\\ a(y^{2}-Q(x))+(b,x)+c > 0 > a}}\!\!\!\!\!\!\!\!\!\! a_{f}^{+}(b,Q(b)+ac)\left(a(y^{2}-Q(x))+(b,x)+c \right),
%%	\rho_{\ell} &= - \rho_{\ell'}Q(\ell') + \frac{1}{4}\sum_{\lambda \in K'}\sum_{\substack{\delta \in M'/M \\ \delta|L = \lambda}}a_{f}(\lambda,Q(\lambda))\mathbb{B}_{2}((\delta,\ell')).
%	\end{align*}
	where $\mathbb{B}_{2}(x)$ is the one-periodic function which agrees with the Bernoulli polynomial $B_{2}(x) = x^{2}-x+1/6$ for $0 \leq x < 1$ and $K_{1-k}(x)$ denotes the $K$-Bessel function.
\end{Theorem}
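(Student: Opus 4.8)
\emph{Proof proposal.}
The plan is to compute the Fourier expansion by unfolding the integral \eqref{eq definition theta lift} against the Siegel theta function, in the spirit of Borcherds \cite{borcherds}, Section~7, and Bruinier \cite{brhabil}, Chapter~2. Using the splitting $L=(\Z\ell\oplus\Z\ell')\oplus^{\perp}K$ I write every $\lambda\in L'$ as $\lambda=b+a\ell'+c\ell$ with $b\in K'$ and $a,c\in\Z$, so that $Q(\lambda)=Q(b)+ac$ and, by \eqref{eq Qlambdaz}, $Q(\lambda_{z})=\frac{1}{4y^{2}}\bigl(a(y^{2}-Q(x))+(b,x)+c\bigr)^{2}$. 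Writing the theta summand as $e(Q(\lambda)u)\,e^{-2\pi(2Q(\lambda_{z})-Q(\lambda))v}\,\e_{b}$, I split $\Theta_{L}$ into its \emph{degenerate} part (the terms with $a=0$) and its \emph{nondegenerate} part (the terms with $a\neq0$), and treat the two contributions to $\Phi(f,(x,y))$ separately: the first three terms of the asserted expansion come from the degenerate part, the final finite sum from the nondegenerate part.

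The degenerate part I integrate directly over $\mathcal{F}_{T}$ after applying Poisson summation in the variable $c$, which replaces $\sum_{c\in\Z}e^{-\pi v((b,x)+c)^{2}/y^{2}}$ by $\frac{y}{\sqrt v}\sum_{c^{*}\in\Z}e(c^{*}(b,x))\,e^{-\pi c^{*2}y^{2}/v}$. Pairing with $f$ and integrating over $u\in[-\tfrac12,\tfrac12]$ selects for each $b$ the Fourier exponent $Q(b)$, producing $a_{f}^{+}(b,Q(b))$ from $f^{+}$ and $a_{f}^{-}(b,Q(b))$ from $f^{-}$; the term $a_{f}^{-}(\gamma,0)v^{1-k}$ in \eqref{eq Fourier expansion} drops out because $f\in H_{k,L}^{\cusp}$, and the factor $e^{2\pi Q(b)v}$ from the theta function cancels the $q$-power. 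The harmonic frequencies $c^{*}\neq0$ then give, via $\int_{0}^{\infty}v^{-2}e^{-\pi c^{*2}y^{2}/v}\,dv=(\pi c^{*2}y^{2})^{-1}$ and the Fourier series $\mathbb{B}_{2}(t)=\frac{1}{2\pi^{2}}\sum_{c^{*}\neq0}(c^{*})^{-2}e(c^{*}t)$, exactly the Bernoulli term $\frac{2\pi}{y}\sum_{b}a_{f}^{+}(b,Q(b))\mathbb{B}_{2}((b,x))$ from $f^{+}$, and the $K$-Bessel term from $f^{-}$ through the integral representation of $K_{1-k}$ (with $\Gamma(1-k,4\pi|Q(b)|v)$ inserted). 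The zero frequency $c^{*}=0$ produces the leading, linearly growing term: here the naive integral diverges, and the finite value $\frac{8\pi y}{n-1}\sum_{b}a_{f}^{+}(b,Q(b))|Q(b)|$ only emerges after the regularized treatment of the cut-off at height $T$ is combined with the contribution of the lower boundary of $\mathcal{F}$.

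For the nondegenerate part I use the transformation of $\Theta_{L}$ with respect to the isotropic vector $\ell$ (see \cite{borcherds}, Section~5, and \cite{brhabil}, Chapter~2) to present these terms as a $\widetilde{\Gamma}_{\infty}\backslash\SL_{2}(\Z)$-Poincar\'e series, which allows me to unfold the regularized integral onto the strip $\{-\tfrac12\le u<\tfrac12,\ v>0\}$. The $u$-integral pairs $f^{+}$ against the theta data and fixes the exponent $Q(\lambda)=Q(b)+ac$, producing the coefficient $a_{f}^{+}(b,Q(b)+ac)$, while the $v$-integral of the Gaussian $e^{-4\pi Q(\lambda_{z})v}$, with $Q(\lambda_{z})=P^{2}/(4y^{2})$ and $P=a(y^{2}-Q(x))+(b,x)+c$, yields a multiple of $|P|/y$. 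The essential subtlety is that not all of these terms survive: the support of the relevant $v$-integral, together with the support of the coefficients and the Gaussian decay, cuts the sum down to the Weyl-chamber region $Q(b)+ac<0$ and $P>0>a$, turning $|P|$ into $P$ and assembling the finite sum $-\frac{8\pi}{y}\sum a_{f}^{+}(b,Q(b)+ac)\,P$. This is exactly where the shape of the singularities of $\Phi$ along the walls $\lambda^{\perp}$ enters.

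The main obstacle will be this nondegenerate part: justifying the unfolding and the interchange of summation with the regularized $T\to\infty$ limit, and proving that the surviving contributions are exactly those cut out by $Q(b)+ac<0$ and $P>0>a$, with the correct constant and with no spurious boundary terms. A secondary delicate point is the regularization of the $c^{*}=0$ degenerate term, where the divergent $v$-integral has to be resolved against the non-strip part of the fundamental domain to produce the precise factor $\frac{8\pi}{n-1}|Q(b)|$. In both places one must finally check that all rearrangements are legitimate, using the absolute convergence guaranteed by $f\in H_{k,L}^{\cusp}$ (which forces $Q(b)<0$ for the nonzero degenerate terms) together with the rapid decay of the incomplete Gamma and $K$-Bessel factors.
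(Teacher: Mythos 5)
Your plan reproduces, in spirit, the computation that yields the first three terms: Poisson summation along the isotropic direction, Fourier extraction in $u$, and the standard $v$-integrals giving the Bernoulli and $K$-Bessel terms. But the mechanism you propose for the fourth term --- the finite sum over $\{a,c,b:\ Q(b)+ac<0,\ a(y^{2}-Q(x))+(b,x)+c>0>a\}$ --- does not exist, and this is precisely the novel content of the theorem. An unfolding computation of this type is only valid where all interchanges of summation and integration are justified, i.e.\ for $(\ell,v_{1})$ small, equivalently $y$ large; this is the hypothesis under which the expansions in \cite{borcherds}, Section~7, and \cite{brhabil}, equation~(3.11), are established. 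In that region the finite sum is \emph{empty}: as the paper shows in its first step, any summand with $Q(b)+ac=m<0$ forces $y\leq\sqrt{|m|}$. Conversely, below that height no ``support of the $v$-integral'' or Gaussian decay eliminates terms: after unfolding, every $\lambda$ with $a\neq 0$ contributes a nonzero Bessel-type integral regardless of the sign of $a(y^{2}-Q(x))+(b,x)+c$. The condition $P>0>a$ records on which side of the walls $\lambda^{\perp}$ the point $(x,y)$ lies; it is a wall-crossing correction, invisible to term-by-term integration near the cusp. The missing idea is the one the paper uses to globalize: establish the expansion for $y$ large (the paper quotes \cite{brhabil}, eq.~(3.11), for the Poincar\'e series $F_{\beta,m}$, with the constant $\Phi^{K}_{\beta,m}$ evaluated by Theorem~2.13 there), invoke the known local singularity type \eqref{eq singularity} of $\Phi$ from \cite{brhabil}, Theorem~2.11, observe that the right-hand side of the theorem has exactly these singularities and agrees with $\Phi$ for $y$ large, and conclude that the difference extends real-analytically to all of $\H^{n}$ and vanishes for large $y$, hence vanishes identically. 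Without this continuation step (or an equivalent chamber-by-chamber wall-crossing analysis) your argument proves the formula only in the region where the last sum vanishes.

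There is also a structural problem earlier in your plan: the splitting of $\Theta_{L}$ into the terms with $a=0$ and $a\neq 0$ is not $\SL_{2}(\Z)$-equivariant, so the two pieces cannot be treated separately as you propose. The degenerate part cannot be paired with $f$ over $\mathcal{F}_{T}$ and then handled by Fourier extraction in $u$, since below height $1$ the fundamental domain does not contain full period intervals; and the nondegenerate part is not by itself a Poincar\'e series for $\widetilde{\Gamma}_{\infty}\backslash\SL_{2}(\Z)$. The correct reduction (Borcherds' Theorem~5.2) applies Poisson summation to the \emph{whole} theta function along $\ell$, producing a sum over pairs $(c,d)$ in which your two classes of lattice vectors are mixed; only the $(0,0)$-term is integrated over $\mathcal{F}$ --- and it is this term, a regularized pairing for the definite sublattice $K$, that produces the contribution linear in $y$, rather than a boundary regularization of a divergent $c^{*}=0$ integral as you suggest. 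Finally, note that the finiteness of the last sum and its vanishing for large $y$, which the paper proves as a preliminary step, is used implicitly throughout your write-up and would need to be established first.
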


\begin{Remark} 
	\begin{enumerate}
		\item If $f \in M_{k,L}^{!}$ is weakly holomorphic, then the second line in the theorem vanishes. All remaining sums are finite, so the above expansion gives a finite expression for $\Phi(f,(x,y))$ for weakly holomorphic $f$.
		\item If we write $\lambda \in V$ with $Q(\lambda) < 0$ as $\lambda = b + a\ell' + c \ell$ with $b \in K \otimes \R$ and $a,c \in \R$, then the set $\lambda^{\perp}$ (consisting of all $z \in \Gr(L)$ orthogonal to $\lambda$) is in the upper half-space model given by
		\[
		\lambda^{\perp} = \{(x,y) \in \H^{n}: a(y^{2}-Q(x))+(b,x)+c = 0\}.
		\]
		The third line in the above Fourier expansion reproduces the singularities of $\Phi(f,(x,y))$ along those $\lambda^{\perp}$ with $a \neq 0$, while the sum involving the function $\mathbb{B}_{2}(x)$ in the first line gives the singularities along those $\lambda^{\perp}$ with $a = 0$.
	\end{enumerate}
\end{Remark}

\begin{proof}[Proof of Theorem~\ref{theorem fourier expansion}]
	We first show that the sum in the last line of the theorem is finite for fixed $(x,y) \in \H^{n}$, and vanishes for all $y$ big enough, independently of $x$. We can split the sum into a finite sum over those $m < 0$ with $a_{f}^{+}(b,m) \neq 0$ for some $b \in K'$, and a sum over $a,c \in \Z, b \in K'$ with $Q(b)+ac = m$ and $a(y^{2}-Q(x))+(b,x)+c > 0 > a$. For $a,c \in \Z$ and $b\in K'$ with $a < 0$ and $Q(b)+ac = m$, we have 
	\[
	-aQ(x)+(b,x)+c \leq -aQ(x)+2\sqrt{|Q(b)|}\sqrt{|Q(x)|}+c \leq  \frac{m}{a}
	\] 
	for all $x \in K \otimes \R$, where we used the Cauchy-Schwarz inequality in the first step. In particular, for fixed $(x,y) \in \H^{n}$ we have
	\[
	a(y^{2}-Q(x))+(b,x)+c \leq ay^{2}+\frac{m}{a},
	\]
	which is positive only for finitely many $a \in \Z$, $a< 0$. Furthermore, this shows that $a(y^{2}-Q(x))+(b,x)+c > 0 > a$ implies $y \leq \sqrt{|m|}$, so each summand in the third line of the theorem vanishes for $y > \sqrt{|m|}$. Fix one of the finitely many $a < 0$ for which $ay^{2}+\frac{m}{a}$ is positive. Then we can write
	\[
	a(y^{2}-Q(x))+(b,x)+c = a(y^{2}-Q(x))+(b,x)+\frac{m-Q(b)}{a}.
	\]
	Since $(b,x)-\frac{Q(b)}{a}$ is positive only for finitely many $b \in K'$, we see that the sum in the third line of the theorem is finite for fixed $(x,y) \in \H^{n}$.
	
%	Using this, we see that for fixed $(x,y) \in \H^{n}$ the sum in the third line of the proposition is finite. Moreover, the sum in the last line of the theorem vanishes for all $y$ big enough since for $Q(b)+ac = m < 0$ and $(x,y)\in \H^{n}$ with $a(y^{2}-Q(x))+(b,x)+c > 0 > a$ we necessarily have $y \leq \sqrt{|m|}$.

	We now write $f$ as a linear combination of Maass Poincar\'e series $F_{\beta,m}$ in order to apply results from \cite{brhabil}. First, equation (3.11) in \cite{brhabil} gives an expression for the Fourier expansion of $\Phi(F_{\beta,m},v_{1})$ for $v_{1} \in V_{1}$ (the hyperboloid model of $\Gr(L)$, see Section~\ref{section half-space}) with $(\ell,v_{1})$ small enough. The constant $\Phi_{\beta,m}^{K}$ appearing in this formula can be evaluated using \cite{brhabil}, Theorem~2.13. We obtain for $(\ell,v_{1})$ small enough the expansion
	\begin{align*}
	\Phi(f,v_{1}) &= \frac{8\pi}{\sqrt{2}(n-1)(\ell,v_{1})} \sum_{\lambda \in K'}a_{f}^{+}(\lambda,Q(\lambda))|Q(\lambda)|\\
	&\quad + 2\sqrt{2}\pi(\ell,v_{1})\sum_{\lambda \in K'}a_{f}^{+}(\lambda,Q(\lambda))\mathbb{B}_{2}\left(\frac{(\lambda,v_{1})}{(\ell,v_{1})}\right) \\
	&\quad + 2\sqrt{2}\left(\frac{\pi}{(\ell,v_{1})} \right)^{-k}\sum_{\lambda \in K'\setminus \{0\}}a_{f}^{-}(\lambda,Q(\lambda))|\lambda|^{1-k}\sum_{n=1}^{\infty}\frac{e\left(n\frac{(\lambda,v_{1})}{(\ell,v_{1})} \right)}{n^{k+1}}K_{1-k}\left(\frac{2\pi n |\lambda|}{(\ell,v_{1})} \right).
	\end{align*}
	Recall that we can write $\lambda \in V$ as $\lambda = b + a\ell' + c \ell$ with $b \in K\otimes \R$ and $a,c \in \R$. In the upper half-space model we have
	\begin{align*}
%	(\ell',v_{1}) &= \frac{1}{\sqrt{2}y}(y^{2}-Q(x)), \quad 
	(\ell,v_{1}) = \frac{1}{\sqrt{2}y}, \quad (\lambda,v_{1}) = \frac{1}{\sqrt{2}y}\left(a(y^{2}-Q(x))+(b,x)+c\right).
	\end{align*}
	Hence, in the coordinates of $\H^{n}$, we find for $y$ big enough the expansion
	\begin{align*}
	\Phi(f,(x,y)) &= \frac{8\pi y}{(n-1)}\sum_{b \in K'}a_{f}^{+}(b,Q(b))|Q(b)|+\frac{2\pi}{y}\sum_{b \in K'}a_{f}^{+}(b,Q(b))\mathbb{B}_{2}\left((b,x)\right) \\
	&+2\sqrt{2}\left(\sqrt{2}\pi y \right)^{-k}\sum_{b \in K'\setminus\{0\}}a_{f}^{-}(b,Q(b))|b|^{1-k}\sum_{n=1}^{\infty}\frac{e\left(n(b,x) \right)}{n^{k+1}}K_{1-k}\left(2\sqrt{2}\pi n y |b| \right).
	\end{align*}
	On the other hand, by \cite{brhabil}, Theorem~2.11, the theta lift $\Phi(f,(x,y))$ has a singularity of type
	\begin{align}\label{eq singularity}
	-\frac{2\pi}{y}\!\!\!\!\!\!\!\!\!\!\sum_{\substack{a,c \in \Z, b \in K' \\ Q(b)+ac < 0 \\ a(y_{0}^{2}-Q(x_{0}))+(b,x_{0})+c = 0}}\!\!\!\!\!\!\!\!\!\!a_{f}^{+}(b,Q(b)+ac)\left(a(y^{2}-Q(x))+(b,x)+c \right)
	\end{align}
	at a point $(x_{0},y_{0}) \in \H^{n}$. Recall that this means that there exists a neighbourhood $U$ of $(x_{0},y_{0})$ such that $\Phi(f,(x,y))$ and the expression in \eqref{eq singularity} are defined on a dense subset of $U$ and such that their difference extends to a real-analytic function on all of $U$. It is not hard to see that the expression on the right-hand side in the theorem has the same singularities as $\Phi(f,(x,y))$ on $\H^{n}$ and agrees with $\Phi(f,(x,y))$ for $y$ big enough. In other words, the difference between $\Phi(f,(x,y))$ and the expression on the right-hand side of the theorem extends to a real-analytic function on all of $\H^{n}$ and vanishes for $y$ big enough, and hence has to vanish identically on all of $\H^{n}$. This finishes the proof.
%	, \quad (\lambda,v_{1}) = \frac{1}{\sqrt{2}y}\left(a(y^{2}-Q(x))+(b,x)+c\right). 
%	\end{align*}
\end{proof}

 \subsection{Evaluation of the theta lift at special points} 
  Let $w \in \Gr(L)$ be a special point and let $P = L \cap w$ and $N = L \cap w^{\perp}$ be the positive and negative definite sublattices corresponding to $w$ as in Section~\ref{section theta functions}. Recall that $k = (1-n)/2$.
  
\begin{Theorem}\label{theorem splitting computation}
	For $f \in H_{k,L}^{\cusp}$ and a special point $w \in \Gr(L)$ we have the evaluation
	\begin{align*}
	\Phi(f,w) &= \CT\left(\left\langle f_{P \oplus N}^{+}, \overline{\widetilde{\Theta}_{P}^{+}\otimes \Theta_{N^{-}}}\right\rangle \right) \\
	&\quad-\lim_{T\to \infty}\int_{\mathcal{F}_{T}}\left\langle  \widetilde{\Theta}_{P}(\tau) \otimes \Theta_{N}(\tau), \xi_{2-k}f_{P \oplus N}(\tau)\right\rangle v^{2-k}\frac{du dv}{v^{2}},
	\end{align*}
	where $\CT$ denotes the constant coefficient of a Fourier series, $f_{P \oplus N}$ denotes the image of $f$ under the restriction map defined in Section~\ref{section operators}, and $\widetilde{\Theta}_{P} \in H_{3/2,P^{-}}^{\hol}$ is a $\xi_{3/2}$-preimage of $\Theta_{P}$.
\end{Theorem}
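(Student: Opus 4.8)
The plan is to reduce the lift at $w$ to a theta integral over the split lattice $P\oplus N$ and then apply Stokes' theorem, using the $\xi_{3/2}$-preimage $\widetilde\Theta_P$ of the holomorphic unary theta series $\Theta_P$ to trade the area integral for a boundary term plus a convergent remainder. For the reduction, by \eqref{eq theta relation} we have $\Theta_L=(\Theta_{P\oplus N})^L$, and since $\res_{L/(P\oplus N)}$ and $\tr_{L/(P\oplus N)}$ are mutually adjoint (Section~\ref{section operators}), for every $\tau\in\H$
\[
\langle f(\tau),\Theta_L(\tau,w)\rangle=\langle f_{P\oplus N}(\tau),\Theta_{P\oplus N}(\tau,w)\rangle=\langle f_{P\oplus N}(\tau),\Theta_P(\tau)\otimes\Theta_N(\tau)\rangle,
\]
where the last equality is the splitting \eqref{splitting siegel theta}, valid at $w$. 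Writing $g:=f_{P\oplus N}$, it thus suffices to evaluate $\lim_{T\to\infty}\int_{\mathcal F_T}\langle g,\Theta_P\otimes\Theta_N\rangle v^k\frac{du\,dv}{v^2}$.

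The heart of the argument is a pointwise identity of differential forms on $\mathcal F_T$. Substituting $\Theta_P=\xi_{3/2}\widetilde\Theta_P=2iv^{3/2}\overline{\partial_{\bar\tau}\widetilde\Theta_P}$ into the inner product, replacing $\overline{\Theta_N}=v^{n/2}\Theta_{N^-}$ by the holomorphic theta series of $N^-=(N,-Q)$, and using the product rule together with $\partial_{\bar\tau}\Theta_{N^-}=0$ and $2i\,\partial_{\bar\tau}g=-v^{-k}\overline{\xi_k g}$, a short computation should give
\[
\langle g,\Theta_P\otimes\Theta_N\rangle v^k\tfrac{du\,dv}{v^2}=-\,d\!\left(\langle g,\overline{\widetilde\Theta_P\otimes\Theta_{N^-}}\rangle\,d\tau\right)-\langle\widetilde\Theta_P\otimes\Theta_{N^-},\xi_k g\rangle\,v^{2-k}\tfrac{du\,dv}{v^2}.
\]
The second summand is precisely the remaining integral in the theorem. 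Moreover $\langle g,\overline{\widetilde\Theta_P\otimes\Theta_{N^-}}\rangle$ transforms like a modular form of weight $k+\tfrac32+\tfrac n2=2$ for $\SL_2(\Z)$, since $\rho_{P\oplus N}$ and $\rho_{(P\oplus N)^-}$ are dual, so $\langle g,\overline{\widetilde\Theta_P\otimes\Theta_{N^-}}\rangle\,d\tau$ is an $\SL_2(\Z)$-invariant $1$-form.

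Applying Stokes' theorem to the exact term, the two vertical sides of $\partial\mathcal F_T$ cancel (being identified by $T$) and the two arcs on the unit circle cancel (being identified by $S$), by invariance of the $1$-form; only the horizontal segment $v=T$ survives and contributes $\int_{-1/2}^{1/2}\langle g,\overline{\widetilde\Theta_P\otimes\Theta_{N^-}}\rangle(u+iT)\,du$. The remaining area integral converges as $T\to\infty$: since $g\in H^{\cusp}_{k,P\oplus N}$ the form $\xi_k g$ is a cusp form, so its Fourier expansion has only positive exponents and the $u$-average of the integrand decays exponentially at the cusp.

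I expect the main obstacle to lie in evaluating the limit of the boundary term. Writing $g=g^++g^-$ and $\widetilde\Theta_P=\widetilde\Theta_P^++\widetilde\Theta_P^-$, I would isolate the genuinely $T$-independent part of $\int_{-1/2}^{1/2}(\cdots)(u+iT)\,du$. The contributions involving $g^-$ vanish in the limit because $g$ is cuspidal (no $v^{1-k}$ term, and incomplete-Gamma decay), while the non-holomorphic part $\widetilde\Theta_P^-$ contributes, besides decaying terms, only a constant of size $O(v^{-1/2})$ that also vanishes as $v=T\to\infty$. Hence only the holomorphic--holomorphic interaction remains, and it produces the $q$-expansion constant term $\CT(\langle g^+,\overline{\widetilde\Theta_P^+\otimes\Theta_{N^-}}\rangle)$. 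Combined with the convergent remainder integral this yields the asserted evaluation of $\Phi(f,w)$.
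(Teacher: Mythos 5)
Your proposal is correct and follows essentially the same route as the paper's proof (which itself refers to Ehlen--Duke for details): reduce to $P\oplus N$ via \eqref{eq theta relation} and the adjointness of $\tr$ and $\res$, use the splitting \eqref{splitting siegel theta} at $w$, substitute $\Theta_P=\xi_{3/2}\widetilde\Theta_P$, apply Stokes' theorem on $\mathcal{F}_T$, and extract the constant term from the horizontal boundary segment while the cuspidality of $f$ and the $O(v^{-1/2})$ decay of $\widetilde\Theta_P^-$ kill the remaining contributions. The only discrepancy is notational: your remainder term $\langle\widetilde\Theta_P\otimes\Theta_{N^-},\xi_k f_{P\oplus N}\rangle v^{2-k}$ is the natural $\SL_2(\Z)$-invariant form of what the paper writes as $\langle\widetilde\Theta_P\otimes\Theta_N,\xi_{2-k}f_{P\oplus N}\rangle v^{2-k}$, so your derivation in fact matches (and cleanly justifies) the intended statement.
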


\begin{Remark}\label{remark constant term}
	By plugging in the Fourier expansions of $f$ and $\widetilde{\Theta}_{P}^{+}$, the constant term on the right-hand side of Theorem~\ref{theorem splitting computation} can be evaluated more explicitly as
	\begin{align*}
	\CT\left(\left\langle f_{P \oplus N}^{+}, \overline{\widetilde{\Theta}_{P}^{+}\otimes \Theta_{N^{-}}}\right\rangle \right) &= \sum_{\substack{\alpha \in P'/P \\ \beta \in N'/N \\ \alpha+\beta \in L'}}\sum_{n \in \Z-Q(\alpha+\beta)}a_{f}^{+}(\alpha + \beta,-n)\sum_{\lambda \in N+\beta}a_{\widetilde{\Theta}_{P}}^{+}(\alpha,n+Q(\lambda)).
	\end{align*}
	Note that all sums on the right-hand side are finite. Theorem~\ref{theorem splitting introduction} in the introduction can easily be derived from this expression.
\end{Remark}

\begin{proof}[Proof of Theorem~\ref{theorem splitting computation}]
	The proof is similar to the proof of \cite{ehlenduke}, Theorem 3.5, so we omit some details for brevity. We first compute for arbitrary $z \in \Gr(L)$
	\begin{align*}
	\Phi(f,z) &= \lim_{T\to \infty}\int_{\mathcal{F}_{T}}\langle f(\tau),\Theta_{L}(\tau,z)\rangle v^{k} \frac{du dv}{v^{2}} \\
	&= \lim_{T\to \infty}\int_{\mathcal{F}_{T}}\langle f(\tau),(\Theta_{P\oplus N})^{L}(\tau,z)\rangle v^{k}\frac{du dv}{v^{2}} \\
	&= \lim_{T\to \infty}\int_{\mathcal{F}_{T}}\langle f_{P\oplus N}(\tau),\Theta_{P\oplus N}(\tau,z)\rangle v^{k}\frac{du dv}{v^{2}},
	\end{align*}
	where we used the relation \eqref{eq theta relation} and the fact that the trace and restriction operators from Section~\ref{section operators} are adjoint to each other. Now we plug in $z = w$ and use the splitting \eqref{splitting siegel theta} to obtain
	\begin{align*}
	\Phi(f,w) = \lim_{T\to \infty}\int_{\mathcal{F}_{T}}\langle f_{P\oplus N}(\tau),\Theta_{P}(\tau)\otimes\Theta_{N}(\tau)\rangle v^{k}\frac{du dv}{v^{2}}.
	\end{align*}
	Let $\widetilde{\Theta}_{P} \in H_{3/2,P^{-}}^{\hol}$ be a harmonic Maass form of weight $3/2$ for $\rho_{P^{-}}$ such that $\xi_{3/2}\widetilde{\Theta}_{P} = \Theta_{P}$. Then we can write
	\begin{align*}
	\Phi(f,w) &= \lim_{T\to \infty}\int_{\mathcal{F}_{T}}\left\langle f_{P\oplus N}(\tau),(\xi_{3/2}\widetilde{\Theta}_{P}(\tau))\otimes\overline{\Theta_{N^{-}}(\tau)}\right\rangle v^{1/2}\frac{du dv}{v^{2}}.
	\end{align*}
	Now applying Stokes' theorem we obtain
	\begin{align*}
	\Phi(f,w) &= \lim_{T \to \infty}\int_{iT}^{1+iT}\left\langle f_{P \oplus N}(\tau),\overline{\widetilde{\Theta}_{P}(\tau)\otimes \Theta_{N^{-}}(\tau)}\right\rangle d\tau \\
	&\quad-\lim_{T\to \infty}\int_{\mathcal{F}_{T}}\left\langle  \widetilde{\Theta}_{P}(\tau) \otimes \Theta_{N}(\tau), \xi_{2-k}f_{P \oplus N}(\tau)\right\rangle v^{2-k}\frac{du dv}{v^{2}}.
	\end{align*}
	Here we used that $\Theta_{N^{-}}$ is holomorphic. Note that the integral in the first line picks out the Fourier coefficient of index $0$ of the integrand. If we split $f_{P \oplus N}$ and $\widetilde{\Theta}_{P}$ into their holomorphic and non-holomorphic parts, and use that the non-holomorphic parts vanish as $v \to \infty$, we obtain the stated formula. 
\end{proof}

\section{Recurrences for the coefficients of mock theta functions}\label{section applications}

As an application of the different evaluations of the regularized theta lift we derive recurrences for the coefficients of some mock theta functions. We obtain relations for Hurwitz class numbers, Andrews' spt-function and some of Ramanujan's mock theta functions. We only consider lattices of signature $(1,1)$ such that every harmonic Maass form in $H_{0,L}^{\cusp}$ is actually weakly holomorphic, that is, we have $H_{0,L}^{\cusp} = M_{0,L}^{!}$.

%For simplicity, we mainly focus on the mock theta function given by the $\C[\Z/2\Z]$-valued generating series 
%	\[
%	\sum_{n = 0}^{\infty}H(n)q^{n/4}\e_{n}, \qquad H(0) = -\frac{1}{12},
%	\] 
%	of Hurwitz class numbers $H(n)$. Zagier \cite{zagiereisensteinseries} showed that it is the holomorphic part of a harmonic Maass form of weight $3/2$ for the dual Weil representation associated to the lattice $(\Z,x^{2})$ which maps to the Jacobi theta function
%	\[
%	-\frac{1}{8\pi}\sum_{n \in \Z}q^{n^{2}/4}\e_{n}
%	\]
%	under $\xi_{3/2}$. However, the methods of this work apply to many other mock theta functions, compare Section~\ref{section mock theta functions} below.

\subsection{Isotropic lattices in signature $(1,1)$} We consider the rational quadratic space $\Q^{2}$ with the quadratic form $Q(a,b) = ab$. Let $L$ be an even lattice in $\Q^{2}$. Then $L$ has signature $(1,1)$ and is isotropic.

Let $w \in \Gr(L)$ be a special point and let $y = (y_{1},y_{2}) \in L$ be its primitive generator with $y_{1},y_{2} > 0$. Furthermore, let $y^{\perp} \in L$ be the primitive generator of $w^{\perp}$ whose second coordinate is positive. Then $y^{\perp}$ is a positive multiple of $(-y_{1},y_{2})$. We let $d_{P} = y_{1}y_{2}$ and $d_{N} = y^{\perp}_{1}y^{\perp}_{2}$. Then $P \cong (\Z,d_{P}x^{2})$ and $N \cong (\Z,-d_{N}x^{2})$. In particular, we may identify
\[
\Theta_{P} = \theta_{1/2,d_{P}}, \qquad \Theta_{N^{-}} = \theta_{1/2,d_{N}},
\]
with the unary theta function $\theta_{1/2,d}$ defined in Section~\ref{section unary theta series}. Combining Theorem~\ref{theorem unfolding} and Theorem~\ref{theorem splitting computation} we obtain the following evaluations of the theta lift for isotropic lattices at special points.

\begin{Theorem}\label{theorem main identity isotropic} Suppose that $L$ is as above. Then the evaluation of the regularized theta lift $\Phi(f,w)$ of $f \in M_{0,L}^{!}$ at a special point $w = \R(y_{1},y_{2}) \in \Gr(L)$ is given by
\begin{align*}
\Phi(f,w) &=\frac{4\pi}{\sqrt{y_{1}y_{2}}}\sum_{\substack{\lambda = (\lambda_{1},\lambda_{2}) \in L' \\ \lambda_{1}\lambda_{2} < 0}}a_{f}(\lambda,\lambda_{1}\lambda_{2})\min\left(|\lambda_{1}y_{2}|,|\lambda_{2}y_{1}|\right) \\
& = \sum_{\substack{\alpha_{P}(2d_{P}) \\ \alpha_{N} (2d_{N})\\ \gamma_{P}+\gamma_{N} \in L'}}\sum_{\substack{n \in \Z-\frac{\alpha_{P}^{2}}{4d_{P}}+\frac{\alpha_{N}^{2}}{4d_{N}}}}a_{f}(\gamma_{P}+\gamma_{N},-n)\sum_{\substack{r \in \Z \\ r \equiv \alpha_{N} (2d_{N})}}a_{\widetilde{\theta}_{1/2,d_{P}}}^{+}(\alpha_{P},n-r^{2}/4d_{N}),
\end{align*}
where we wrote $\gamma_{P} = \frac{\alpha_{P}}{2d_{P}}y \in P'$ and $\gamma_{N} = \frac{\alpha_{N}}{2d_{N}}y^{\perp} \in N'$, and $\widetilde{\theta}_{1/2,d_{P}}$ is a $\xi_{3/2}$-preimage of the unary theta function $\theta_{1/2,d_{P}}$. The sum in the first line is finite.
\end{Theorem}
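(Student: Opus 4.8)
The plan is to obtain the two displayed formulas for $\Phi(f,w)$ by specializing the two general theorems quoted above to the present signature $(1,1)$ situation, and then to check the claimed finiteness of the first sum. The second equality is essentially immediate: since $L$ is isotropic of signature $(1,1)$ we have $k = (1-n)/2 = 0$, and by hypothesis $H_{0,L}^{\cusp} = M_{0,L}^{!}$, so $f$ is weakly holomorphic and the non-holomorphic part $\xi_{2-k}f_{P\oplus N} = \xi_2 f_{P\oplus N}$ vanishes. Thus the integral term in Theorem~\ref{theorem splitting computation} drops out and $\Phi(f,w)$ equals the constant-term expression, which by Remark~\ref{remark constant term} expands as a double sum over the Fourier coefficients of $f$ and of $\widetilde{\Theta}_P^+$. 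I would then insert the explicit identifications $\Theta_P = \theta_{1/2,d_P}$ and $\Theta_{N^-} = \theta_{1/2,d_N}$, parametrize $P'/P \cong \Z/2d_P\Z$ and $N'/N \cong \Z/2d_N\Z$ by $\alpha_P, \alpha_N$ via $\gamma_P = \tfrac{\alpha_P}{2d_P}y$ and $\gamma_N = \tfrac{\alpha_N}{2d_N}y^\perp$, and rewrite $Q(\lambda)$ for $\lambda \in N+\gamma_N$ in terms of $r^2/4d_N$. This reproduces the second line after relabeling the inner sum from $N+\beta$ to $r \equiv \alpha_N \ (2d_N)$.

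For the first equality I would start from the invariant representation \eqref{eq unfolding 1} in Remark~\ref{remark unfolding}, valid for $n=1$ and $f \in H_{0,L}^{\cusp}$, which gives
\begin{align*}
\Phi(f,w) = 4\pi \sum_{\substack{\lambda \in L' \\ Q(\lambda) < 0}}a_{f}^{+}(\lambda,Q(\lambda))\left(\sqrt{|Q(\lambda_{w^{\perp}})|}-\sqrt{|Q(\lambda_{w})|} \right).
\end{align*}
The main task is then a direct computation of $Q(\lambda_w)$ and $Q(\lambda_{w^\perp})$ in the explicit coordinates of $\Q^2$ with $Q(a,b)=ab$. Writing $\lambda = (\lambda_1,\lambda_2) \in L'$, and using that $w = \R(y_1,y_2)$ with $Q(y_1,y_2) = y_1 y_2 = d_P > 0$ and $w^\perp = \R(-y_1^\perp, y_2^\perp)$ (up to sign), I would project $\lambda$ onto the positive line $w$ and its negative complement using the bilinear form $((a,b),(c,d)) = ad+bc$. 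A short calculation should yield $\sqrt{|Q(\lambda_{w^\perp})|} - \sqrt{|Q(\lambda_w)|}$ as a multiple of $\min(|\lambda_1 y_2|, |\lambda_2 y_1|)$ divided by $\sqrt{y_1 y_2}$; the appearance of $\min$ is the signature $(1,1)$ analogue of the way the two square roots combine, since on the isotropic cone the two projections pick out the two coordinate contributions and the difference of square roots collapses to the smaller of them.

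Finally I would verify finiteness of the sum in the first line. Since $f$ is weakly holomorphic, $a_f^+(\lambda, Q(\lambda)) = a_f(\lambda, \lambda_1\lambda_2)$ is nonzero only for finitely many values of the norm $Q(\lambda) = \lambda_1\lambda_2 = m < 0$ (those in the principal part), so it suffices to bound, for each such fixed $m$, the number of $\lambda \in L'$ with $\lambda_1 \lambda_2 = m$ and $\min(|\lambda_1 y_2|, |\lambda_2 y_1|) \neq 0$. The constraint $\lambda_1 \lambda_2 = m$ together with the lattice condition forces $\lambda_1, \lambda_2$ to range over a finite set of divisor-type factorizations of a bounded quantity, exactly as in the finiteness argument for the Fourier expansion in the proof of Theorem~\ref{theorem fourier expansion}. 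The main obstacle I anticipate is purely bookkeeping: correctly tracking the finite-index relationship between $L$ and $P \oplus N$ (the dual lattices and the condition $\gamma_P + \gamma_N \in L'$) and pinning down the precise normalizations of the square-root projections so that the constant $4\pi/\sqrt{y_1 y_2}$ and the $\min$ emerge with the right factors, rather than any genuinely difficult analytic step.
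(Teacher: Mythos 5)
Your proposal matches the paper's own proof in all essentials: the first line is obtained exactly as in the paper by specializing to \eqref{eq unfolding 1}, computing $Q(\lambda_w)$ and $Q(\lambda_{w^\perp})$ in the explicit coordinates of $(\Q^2, ab)$, and collapsing the difference of square roots to $\tfrac{1}{\sqrt{y_1y_2}}\min(|\lambda_1 y_2|,|\lambda_2 y_1|)$, while the second line follows, as in the paper, from Theorem~\ref{theorem splitting computation} and Remark~\ref{remark constant term} after noting the $\xi$-term vanishes for weakly holomorphic $f$. Your divisor-factorization argument for finiteness (clearing denominators so that $\lambda_1\lambda_2 = m$ becomes a factorization of a fixed integer) is correct and in fact slightly more explicit than the paper, which leaves this point implicit.
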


\begin{proof}
	We simplify the formula from Theorem~\ref{theorem unfolding} (or rather its specialization to $n = 1$ given in equation \eqref{eq unfolding 1} in Remark~\ref{remark unfolding}). For $\lambda = (\lambda_{1},\lambda_{2}) \in \Q^{2}$ with $Q(\lambda) = \lambda_{1}\lambda_{2}$ we have
	\[
	Q(\lambda_{w}) = \frac{1}{2}\frac{(\lambda,y)^{2}}{|y|^{2}}, \qquad |Q(\lambda_{w^{\perp}})| = -Q(\lambda)+Q(\lambda_{w}) = -\lambda_{1}\lambda_{2}+\frac{1}{2}\frac{(\lambda,y)^{2}}{|y|^{2}}.
	\]
	Further, we have the explicit formulas
	\begin{align*}
	(\lambda,y) = \lambda_{1}y_{2}+\lambda_{2}y_{1}, \qquad -2\lambda_{1}\lambda_{2}|y|^{2}+(\lambda,y)^{2} = (\lambda_{1}y_{2}-\lambda_{2}y_{1})^{2}, 
	\end{align*}
	so we obtain
	\begin{align*}
	\sqrt{|Q(\lambda_{w^{\perp}})|}-\sqrt{|Q(\lambda_{w})|} = \frac{1}{2\sqrt{y_{1}y_{2}}}\left(|\lambda_{1}y_{2}-\lambda_{2}y_{1}|- |\lambda_{1}y_{2}+\lambda_{2}y_{1}|\right).
	\end{align*}
	Also note that
	\[
	|\lambda_{1}y_{2}-\lambda_{2}y_{1}|- |\lambda_{1}y_{2}+\lambda_{2}y_{1}| = 2\min\left(|\lambda_{1}y_{2}|,|\lambda_{2}y_{1}|\right)
	\]
	since $\lambda_{1}\lambda_{2} < 0$ and $y_{1},y_{2} > 0$. When combined with \eqref{eq unfolding 1}, this yields the formula in the first line in the theorem.
	
	The right-hand side of the formula in the theorem can be obtained from Theorem~\ref{theorem splitting computation} and Remark~\ref{remark constant term}.
\end{proof}

%For the modified theta lift, we obtain the following formula by an analgous simplification.
%
%\begin{Theorem}\label{theorem main identity modified isotropic} Suppose that $F_{\beta,m}$ is weakly holomorphic and that $L$ is isotropic. Then the evaluation of the modified regularized theta lift $\Phi^{*}_{\beta,m}(w)$ at a special point $w \in \Gr(L)$ is given by
%\begin{align*}
%&\Phi_{\beta,m}^{*}(w) =-\frac{4}{\sqrt{y_{1}y_{2}}}\sum_{\substack{\lambda \in L + \beta \\ \lambda_{1}\lambda_{2} = -m}}\sgn(|\lambda_{1}y_{2}|-|\lambda_{2}y_{1}|)\min\left(|\lambda_{1}y_{2}|,|\lambda_{2}y_{1}|\right) \\
%& = \sum_{\substack{\alpha_{P}(2d_{P}) \\ \alpha_{N} (2d_{N})\\ \gamma_{P}+\gamma_{N} \in L'}}\sum_{\substack{n \in \Z-\frac{\alpha_{P}^{2}}{4d_{P}}+\frac{\alpha_{N}^{2}}{4d_{N}}}}a_{\beta,m}(\gamma_{P}+\gamma_{N},-n)\sum_{\substack{r \in \Z \\ r \equiv \alpha_{N} (2d_{N})}}r  a_{\widetilde{\theta}_{3/2,d_{P}}}^{+}(\alpha_{P},n-r^{2}/4d_{N}),
%\end{align*}
%with the same notation as in Theorem~\ref{theorem main identity isotropic}.
%\end{Theorem}

By comparing the two finite formulas on the right-hand side of Theorem~\ref{theorem main identity isotropic} we obtain non-trivial recurrence relations for the coefficients of the mock theta function $\widetilde{\theta}_{1/2,d_{P}}^{+}$. We illustrate this in some examples.

\begin{Example}\label{example hurwitz} Let $L$ be the lattice spanned by the vectors $(1,0)$ and $(0,1)$, that is, a hyperbolic plane. Note that $L$ is unimodular, so vector-valued modular forms for $\rho_{L}$ are just scalar-valued modular forms for $\SL_{2}(\Z)$. By \cite{niebur}, Theorem 1, for $m \in \N$ the Maass (or Niebur) Poincar\'e series $F_{\beta,m} \in M_{0,L}^{!}$ is given by the weakly holomorphic modular form $2j_{m}+48\sigma_{1}(m)$, where $j_{m}$ denotes the unique modular function for $\SL_{2}(\Z)$ whose Fourier expansion starts $q^{-m}+O(q)$. 

Let $y = (y_{1},y_{2}) \in \Z^{2}$ with $y_{1},y_{2} > 0$ coprime. Then $y^{\perp} = (-y_{1},y_{2})$ and $d := d_{P} = d_{N} = y_{1}y_{2}$. After some simplification, Theorem~\ref{theorem main identity isotropic} applied to $f = 2j_{m}+48\sigma_{1}(m)$ gives the formula
\begin{align}\label{eq formula hyperbolic plane}
	&\frac{16\pi}{\sqrt{y_{1}y_{2}}}\sum_{\substack{\lambda_{1},\lambda_{2} \in \N \\   \lambda_{1}\lambda_{2} = m}}\min(\lambda_{1}y_{2},\lambda_{2}y_{1})=\sum_{n \in \Z}a_{m}(-n)\sum_{r \in \Z}a_{\widetilde{\theta}_{1/2,d}^{+}}(\sigma_{y_{1}}(r),n-r^{2}/4d),
	\end{align}
	where $\sigma_{c}$ for $c \mid\mid d$ is the Atkin-Lehner involution on $\Z/2d\Z$ defined in Section~\ref{section unary theta series} and $a_{m}(-n)$ denotes the $(-n)$-th coefficient of $2j_{m}+48\sigma_{1}(m)$.
	
	Zagier \cite{zagiereisensteinseries} showed that the $\C[\Z/2\Z]$-valued generating series 
	\[
	\sum_{n = 0}^{\infty}H(n)q^{n/4}\e_{n}, \qquad H(0) = -\frac{1}{12},
	\] 
	of Hurwitz class numbers $H(n)$ is the holomorphic part of a harmonic Maass form of weight $3/2$ for the dual Weil representation associated to the lattice $(\Z,x^{2})$ which maps to $(-1/8\pi)\theta_{1/2,1}$ under $\xi_{3/2}$.
	Applying formula \eqref{eq formula hyperbolic plane} with the special point $y = (1,1)$ with $d = 1$, we recover the classical Hurwitz-Kronecker class number relation
	\begin{align}
\sum_{r \in \Z}H(4m-r^{2}) = 2\sigma_{1}(m) - \sum_{\substack{a,b \in \N \\ ab = m}}\min(a,b)
\end{align}
for $m \in \N$.

More generally, for a square-free positive integer $N$ we consider the $\C[\Z/2N\Z]$-valued generating series
\[
\sum_{r(2N)}\sum_{\substack{n \geq 0 \\ r^{2}\equiv -n (4N)}}H_{r}(n)q^{n/4N}\e_{r}
\]
with the level $N$ Hurwitz class numbers
\[
H_{0}(0) = -\frac{1}{12}\sigma_{1}(N) , \qquad H_{r}(n) = \frac{1}{2}\sum_{\substack{Q \in \mathcal{Q}_{N,-n,r}}/\Gamma_{0}(N)}\frac{1}{|\overline{\Gamma}_{0}(N)_{Q}|} \quad  (n > 0),
\]
where 
\[
\mathcal{Q}_{N,-n,r} = \{[a,b,c] \,: \, a,b,c \in \Z,\, b^{2}-4ac = -n, \, N\mid a, \, b \equiv r (2N)\}.
\]
It was shown in \cite{funke} that it is a mock modular form of weight $3/2$ for the dual Weil representation associated to the lattice $(\Z,Nx^{2})$ whose shadow is given by
\[
-\frac{\sqrt{N}}{8\pi}\sum_{d \mid N}\theta_{1/2,N}^{\sigma_{d}}.
\]
We sum up formula \eqref{eq formula hyperbolic plane} for all special points of discriminant $2N$, which are precisely the points $y = (d,N/d)$ for $d \mid N$, to obtain the relation
\begin{align}
\sum_{r \in \Z}H_{r}(4Nm -r^{2}) = 2\sigma_{1}(N)\sigma_{1}(m) - \sum_{d \mid N}\sum_{\substack{a,b \in \N \\ ab = m}}\min\left(ad,\tfrac{N}{d}b\right).
\end{align}
We remark that there is a vast literature on recurrences for Hurwitz class numbers, see for example \cite{bringmannkanehurwitz, brown, cohen, eichler, kronecker, mertens1,mertens2, williams2, williams}.
\end{Example}

\begin{Example}
	We consider the lattice $L = \Z y + \Z y^{\perp}$ with $y = (1,1)$ and $y^{\perp} = (-1,1)$. In this case we have $L' = \Z y/2+ \Z y^{\perp}/2$. In particular, $L'/L$ is isomorphic to $(\Z/2\Z)^{2}$, so we can write the elements of $L'/L$ in the form $\beta = (\beta_{1},\beta_{2})$ with $\beta_{1},\beta_{2} \in \Z/2\Z$. We apply Theorem~\ref{theorem main identity isotropic} to $f = F_{\beta,m}$ with $\beta = (0,0)$ and $\beta = (1,1)$, respectively. As a $\xi$-preimage of the unary theta function $\theta_{1/2,1}$ we again choose the generating series of Hurwitz class numbers from the last example. The formula in Theorem~\ref{theorem main identity isotropic} involves the constant coefficients $a_{F_{\beta,m}}(\gamma,0)$ of $F_{\beta,m}$, for isotropic $\gamma \in L'/L$. It follows from the residue theorem that this coefficient is given by the negative of the $(\beta,m)$-th coefficient of the vector-valued Eisenstein series $E_{\gamma}$ studied in \cite{brkuss}, and  hence can be computed in terms of divisor sums using the explicit formulas given there. Alltogether, we obtain the formulas
\begin{align}
\sum_{r\equiv 0 (2)}H(4m-r^{2}) = \frac{4}{3}\sigma_{1}(m)-2\sigma_{1}(m/2)+\frac{8}{3}\sigma_{1}(m/4)-\frac{1}{2}\!\!\!\sum_{\substack{a,b \in \Z \\ b^{2}-a^{2} = m}}\!\!\!\min(|a+b|,|a-b|)
\end{align}
and
\begin{align}
	\sum_{r\equiv 1 (2)}H(4m-r^{2}) = \frac{2}{3}\sigma_{1}(m)+2\sigma_{1}(m/2)-\frac{8}{3}\sigma_{1}(m/4)-\frac{1}{2}\!\!\!\!\!\!\!\!\sum_{\substack{a,b \in \Z \\ b^{2}+b-a^{2}-a = m}}\!\!\!\!\!\min(|a+b+1|,|a-b|)
	\end{align}
	for $m \in \N$, where $\sigma_{1}(x) = 0$ if $x$ is not integral. We leave the details of the simplification to the reader. This generalizes some of the formulas found by Brown et al. \cite{brown}, where the authors studied restricted class number sums for $m = p$ being an odd prime. We also refer to \cite{bringmannkanehurwitz, williams2, williams} for similar formulas involving restricted class number sums. 
\end{Example}

\begin{Example}\label{example spt}
	The smallest parts function $\spt(n)$, introduced by Andrews in \cite{andrews}, counts the total number of smallest parts in the partitions of $n$ (with $\spt(0) = 0$). For $n \in \N_{0}$ we let
	\[
	s(n) = \spt(n) +\frac{1}{12}(24 n -1)p(n),
	\]
	with the partition function $p(n)$. In particular, $s(0) = -1/12$. Bringmann \cite{bringmannduke} showed that the generating series
	\[
	q^{-\frac{1}{24}}\sum_{n=0}^{\infty}s(n)q^{n}
	\]
	is the holomorphic part of a harmonic Maass form $F(\tau)$ of weight $\frac{3}{2}$ for $\Gamma_{0}(576)$ with character $\left(\frac{12}{\cdot} \right)$ which satisfies $\xi_{3/2}F = (-\sqrt{6}/4\pi)\eta$. From this one can infer that the function
	\[
	\sum_{r (12)}\left(\frac{12}{r}\right)F(\tau)\e_{r} = F(\tau)(\e_{1}-\e_{5}-\e_{7}+\e_{11})
	\]
	is a harmonic Maass form of weight $3/2$ for the Weil representation associated to $(\Z,6x^{2})$ which maps to 
	\[
	-\frac{\sqrt{6}}{4\pi}\left(\theta_{1/2,6}-\theta_{1/2,6}^{\sigma_{2}}\right)
	\] 
	under $\xi_{3/2}$ (compare \cite{ahlgrenandersen}, Section 3).
	
	We choose the special points $y = (1,6)$ and $y = (2,3)$ in \eqref{eq formula hyperbolic plane} and subtract the resulting equations. We obtain after some simplification the formula
	\begin{align}
	\sum_{r \in \Z}\left( \frac{12}{r}\right)s\left(m-\frac{r^{2}}{24}+\frac{1}{24} \right) = 4\sigma_{1}(m)-2\sum_{d \mid m}\left(\min(6d,m/d)-\min(3d,2m/d) \right)
	\end{align}
	for all $m \in \N$.
\end{Example}

\subsection{Anisotropic lattices in signature $(1,1)$} Let $F = \Q(\sqrt{D})$ with a non-square discriminant $D > 0$. It is a rational quadratic space with the quadratic form given by the norm 
\[
Q(\nu) = N_{F/\Q}(\nu) = \nu\nu'.
\]
The associated bilinear form is given by the trace 
\[
(\nu,\mu) = \tr_{F/\Q}(\nu\mu') = \nu\mu'+\nu'\mu.
\]
Let $L \subset F$ be an even lattice. Then $L$ has signature $(1,1)$ and is anisotropic.

We let $w \in \Gr(L)$ be a special point and we let $y \in L$ be the primitive generator of $w$ with $y,y' > 0$. Furthermore, we let $y^{\perp}$ be the primitive generator of $w^{\perp}$ with $y^{\perp} < 0$ and $y^{\perp'} > 0$. Let $d_{P} = yy'$ and $d_{N} = y^{\perp}y^{\perp'}$. Then $P \cong (Z,d_{P}x^{2})$ and $N \cong (\Z, -d_{N}x^{2})$.
%and let $\mathcal{O}_{K} = \Z + \frac{1+\sqrt{p}}{2}\Z$ be its ring of integers with the quadratic form 
%\[
%Q\left(x+\frac{1+\sqrt{p}}{2}y\right) = N\left(x+\frac{1+\sqrt{p}}{2}y\right) =\left(x+\frac{y}{2} \right)^{2} - p\left( \frac{y}{2}\right)^{2} = x^{2} + xy+\frac{p-1}{4}y^{2}
%\]
%coming from the norm $N(\nu) = \nu\nu'$. Its dual lattice is given by the inverse different $\mathfrak{d}^{-1} = \frac{1}{\sqrt{p}}\mathcal{O}_{K}$. The discriminant group is isomorphic to $\Z/p\Z$ with $Q(x) = \frac{x^{2}}{p}$ (take the representatives $\frac{x}{\sqrt{p}}$ of $\mathfrak{d}^{-1}/\mathcal{O}_{K}$ with $x \in \Z/p\Z$).
%
%We have
%\begin{align*}
%2m|y|^{2}+(\lambda,y)^{2} &= -4\lambda\lambda'yy'+(\lambda y'+\lambda y')^{2} = (\lambda y'-\lambda'y), \\
%(\lambda,y) &= (\lambda y' + \lambda'y) ,
%\end{align*}
%hence
%\begin{align*}
%\sqrt{|Q(\lambda_{w^{\perp}})|}-\sqrt{|Q(\lambda_{w})|}  = \frac{1}{2\sqrt{yy'}}\left(|\lambda y' -\lambda'y|-|\lambda y'+\lambda'y| \right) = \frac{1}{\sqrt{yy'}}\min(|\lambda y'|, |\lambda'y|).
%\end{align*}

Using similar arguments as in the proof of Theorem~\ref{theorem main identity isotropic} we obtain the following formula.

\begin{Theorem}\label{theorem main identity anisotropic} Suppose that $L$ is as above. Then the evaluation of the regularized theta lift $\Phi(f,w)$ of $f \in M_{0,L}^{!}$ at a special point $w = \R y \in \Gr(L)$ is given by
\begin{align*}
\Phi(f,w) &=\frac{4\pi}{\sqrt{yy'}}\sum_{\substack{\lambda \in L' \\ \lambda\lambda' <0}}a_{f}(\lambda,\lambda\lambda')\min(|\lambda y'|,|\lambda'y|)\\
& = \sum_{\substack{\alpha_{P}(2d_{P}) \\ \alpha_{N} (2d_{N})\\ \gamma_{P}+\gamma_{N} \in L'}}\sum_{\substack{n \in \Z-\frac{\alpha_{P}^{2}}{4d_{P}}+\frac{\alpha_{N}^{2}}{4d_{N}}}}a_{f}(\gamma_{P}+\gamma_{N},-n)\sum_{\substack{r \in \Z \\ r \equiv \alpha_{N} (2d_{N})}}a_{\widetilde{\theta}_{1/2,d_{P}}}^{+}(\alpha_{P},n-r^{2}/4d_{N}),
\end{align*}
where we wrote $\gamma_{P} = \frac{\alpha_{P}}{2d_{P}}y\in P'$ and $\gamma_{N} = \frac{\alpha_{N}}{2d_{N}}y^{\perp} \in N'$, and $\widetilde{\theta}_{1/2,d_{P}}$ is a $\xi_{3/2}$-preimage of the unary theta function $\theta_{1/2,d_{P}}$.
\end{Theorem}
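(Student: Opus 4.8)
The plan is to mirror the proof of Theorem~\ref{theorem main identity isotropic}, adapting only the geometry of the splitting to the anisotropic situation. The structure is identical: the first line comes from specializing Theorem~\ref{theorem unfolding} (equivalently equation~\eqref{eq unfolding 1}) to $n=1$, while the second line is the evaluation at the special point provided by Theorem~\ref{theorem splitting computation} together with the explicit constant term in Remark~\ref{remark constant term}. Since the second equality is purely formal and carries over verbatim from the isotropic case (it only uses $P\cong(\Z,d_P x^2)$, $N\cong(\Z,-d_N x^2)$, and the identifications $\Theta_P=\theta_{1/2,d_P}$, $\Theta_{N^-}=\theta_{1/2,d_N}$), the only work is to reprove the first equality in the new coordinates.

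First I would record the relevant projections. For $\lambda\in F=L\otimes\Q$ and the generator $y$ of $w$ with $y,y'>0$, the positive line is spanned by $y$, so
\begin{align*}
Q(\lambda_w)=\frac{1}{2}\frac{(\lambda,y)^2}{|y|^2},\qquad |Q(\lambda_{w^\perp})|=-Q(\lambda)+Q(\lambda_w)=-\lambda\lambda'+\frac{1}{2}\frac{(\lambda,y)^2}{|y|^2},
\end{align*}
exactly as before but with $(\lambda,y)=\tr_{F/\Q}(\lambda y')=\lambda y'+\lambda' y$ and $|y|^2=(y,y)=2yy'=2d_P$. I would then compute the two algebraic identities analogous to the isotropic case, namely $(\lambda,y)=\lambda y'+\lambda' y$ and
\[
-2\lambda\lambda'|y|^2+(\lambda,y)^2=(\lambda y'-\lambda' y)^2,
\]
the latter being an immediate expansion. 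Substituting these into the difference $\sqrt{|Q(\lambda_{w^\perp})|}-\sqrt{|Q(\lambda_w)|}$ and using $|y|^2=2yy'$ gives
\[
\sqrt{|Q(\lambda_{w^\perp})|}-\sqrt{|Q(\lambda_w)|}=\frac{1}{2\sqrt{yy'}}\bigl(|\lambda y'-\lambda' y|-|\lambda y'+\lambda' y|\bigr).
\]

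Finally I would invoke the elementary identity $|s-t|-|s+t|=2\min(|s|,|t|)$ valid when $st<0$: here the summation condition $Q(\lambda)=\lambda\lambda'<0$ guarantees that $\lambda$ and $\lambda'$ have opposite signs, hence $(\lambda y')(\lambda' y)<0$ since $y,y'>0$, so with $s=\lambda y'$ and $t=\lambda' y$ we obtain $|\lambda y'-\lambda' y|-|\lambda y'+\lambda' y|=2\min(|\lambda y'|,|\lambda' y|)$. Plugging this into \eqref{eq unfolding 1} produces the first line of the theorem. I expect no genuine obstacle: the main subtlety, as with the isotropic case, is bookkeeping the correct sign conventions for $y,y',y^\perp,y^{\perp\prime}$ so that the minimum is taken of the right quantities, and confirming that the finiteness of the first sum (which in the anisotropic case is automatic since $L$ is anisotropic and the coefficients $a_f(\lambda,\lambda\lambda')$ are supported on finitely many negative norms) is correctly inherited. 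The second equality requires no new argument beyond citing Theorem~\ref{theorem splitting computation} and Remark~\ref{remark constant term}, as the paper already indicates.
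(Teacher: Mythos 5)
Your overall strategy is exactly the paper's: the paper proves this theorem simply by noting that the arguments of Theorem~\ref{theorem main identity isotropic} carry over, i.e., one specializes \eqref{eq unfolding 1} via the projection formulas $Q(\lambda_w)=\tfrac{1}{2}(\lambda,y)^2/|y|^2$ and $|Q(\lambda_{w^\perp})|=-Q(\lambda)+Q(\lambda_w)$, uses the identities $(\lambda,y)=\lambda y'+\lambda' y$ and $-2\lambda\lambda'|y|^2+(\lambda,y)^2=(\lambda y'-\lambda' y)^2$ together with $|s-t|-|s+t|=2\min(|s|,|t|)$ for $st<0$, and obtains the second line from Theorem~\ref{theorem splitting computation} and Remark~\ref{remark constant term}. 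All of that is correct in your write-up.

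However, your parenthetical claim that finiteness of the first sum ``is automatic since $L$ is anisotropic'' is exactly backwards, and this is a genuine gap. In the isotropic case the set of $\lambda\in L'$ with $\lambda_1\lambda_2=m\neq 0$ is finite (pairs of divisors), which is why Theorem~\ref{theorem main identity isotropic} can assert finiteness. In the anisotropic case $F=\Q(\sqrt{D})$, for each negative $m$ represented by $L\pm\beta$ the solution set of $\lambda\lambda'=m$ is a full orbit under the infinite group $\{\pm\varepsilon_\beta^n:n\in\Z\}$ of norm-one units, hence \emph{infinite}; correspondingly, the paper's statement of Theorem~\ref{theorem main identity anisotropic} deliberately omits the finiteness claim, and Remark~\ref{remark anisotropic} exists precisely to repair this: it decomposes the sum into unit orbits, evaluates $\min(|\lambda\varepsilon_\beta^n y'|,|\lambda'\varepsilon_\beta'^n y|)$ case by case in $n$, and sums the two resulting convergent geometric series to get the finite expression $2\sum_{\lambda\in R(m)}\tr_{F/\Q}\left(\lambda y'/(1-\varepsilon_\beta')\right)$. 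This is not mere sign bookkeeping: the absolute convergence of the series in Theorem~\ref{theorem unfolding} for $k=0$ is itself justified in the paper by appeal to this unit-orbit computation, so without it your argument both asserts a false finiteness statement and leaves the convergence (and hence the meaning) of the first line unestablished.
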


\begin{Remark}\label{remark anisotropic} Note that the sum in the first line of the theorem is not finite due to the existence of units. However, according to the discussion after Theorem 9 in Section 4 of \cite{bruinierbundschuh}, it can be rewritten as a finite sum as follows. First, we can split the sum as
\[
\sum_{\substack{\lambda \in L' \\ \lambda\lambda' <0}}a_{f}(\lambda,\lambda\lambda')\min(|\lambda y'|,|\lambda'y|) = \sum_{\beta \in L'/L}\sum_{m < 0}a_{f}(\beta,m)\sum_{\substack{\lambda \in L+\beta \\ \lambda\lambda' = m}}\min(|\lambda y'|,|\lambda'y|),
\]
where the first two sums are finite. The set of units of norm one acting on $L \pm \beta = (L + \beta) \cup (L-\beta)$ is of the form $\{\pm 1\} \times \{\varepsilon_{\beta}^{n}: n \in \Z\}$ for some unit $\varepsilon_{\beta} > 1$. Further, the set 
\[
R(m) = \{\lambda \in L\pm \beta: \, \lambda > 0, \, \lambda\lambda' = m, \, (\lambda ,y) < 0, \, (\lambda \varepsilon_{\beta},y) \geq 0\}
\]
is finite, and by Dirichlet's unit theorem the set of all $\lambda \in L \pm \beta$ with $\lambda > 0$ and $\lambda\lambda' = m$ is given by
\[
\{\pm \lambda \varepsilon_{\beta}^{n}: \lambda \in R(m), \, n \in \Z\}.
\]
For $\lambda \in R(m)$ and $n \in \Z$ we have
\[
\min(|\lambda \varepsilon_{\beta}^{n}y'|,|\lambda'\varepsilon_{\beta}'^{n}y|) = \begin{cases}
\lambda \varepsilon_{\beta}^{n}y', & \text{if } n \leq 0,\\
-\lambda'\varepsilon_{\beta}'^{n}y, & \text{if }n > 0.
\end{cases}
\]
Thus we obtain
\begin{align*}
\sum_{\substack{\lambda \in L\pm\beta \\ \lambda > 0 \\ \lambda\lambda' = m}}\min(|\lambda y'|,|\lambda'y|) &= 2\sum_{\lambda \in R(m)}\left(\lambda y'\sum_{n \leq 0}\varepsilon_{\beta}^{n}-\lambda'y\sum_{n > 0}\varepsilon_{\beta}'^{n} \right) \\
&= 2\sum_{\lambda \in R(m)}\left( \frac{\lambda y'}{1-\varepsilon_{\beta}'}+\frac{\lambda'y}{1-\varepsilon_{\beta}} \right) = 2\sum_{\lambda \in R(m)}\tr_{F/\Q}\left(\frac{\lambda y'}{1-\varepsilon_{\beta}'}\right).
\end{align*}
\end{Remark}

\begin{Example} Let $L = \mathcal{O}_{F}$ be the ring of integers of $F = \Q(\sqrt{D})$. The dual lattice of $L$ is given by the inverse different $\mathfrak{d}^{-1} = (1/\sqrt{D})\mathcal{O}_{F}$ and the discriminant group of $L$ has level $D$.

Let $\beta = 0$ for simplicity. Then $\varepsilon_{0}$ is the smallest unit $> 1$ of norm $1$ (which is either the fundamental unit of $F$ or its square). The coefficient of index $(0,0)$ of the Niebur Poincar\'e series $F_{0,m}$ is given by the negative of the coefficient of index $(0,m)$ of the weight $2$ Eisenstein series $E = 2\e_{0} + O(q)$ for the dual Weil representation, compare \cite{brhabil}, Proposition~1.14. By results of Zhang \cite{zhang}, holomorphic vector-valued modular forms of weight $k$ for the Weil representation associated with $\mathcal{O}_{F}$ can be identified with a certain space $M_{k}^{\epsilon}(D,\chi_{D})$ of scalar-valued modular forms for $\Gamma_{0}(D)$ with character $\chi_{D} = \left( \frac{D}{\cdot}\right)$. In particular, by \cite{zhang}, Section~4, the coefficient of index $(0,m)$ of the vector-valued Eisenstein series $E$ is equal to the $Dm$-th coefficient of the scalar-valued weight $2$ Eisenstein series 
\[
E_{2}^{(D)}(\tau) = 2+\frac{4}{L(-1,\chi_{D})}\sum_{n=1}^{\infty}\sigma_{1}^{(D)}(n) q^{n} \in M_{2}^{\epsilon}(D,\chi_{D}),
\]
with the usual Dirichlet $L$-function $L(s,\chi_{D})$ and the twisted divisor sum
\[
\sigma_{1}^{(D)}(n) = \sum_{d \mid n}\sum_{m \mid \mid D}\chi_{m}(n/d)\chi_{m}'(d)d,
\]
where $\chi_{m}$ denotes the $m$-part of $\chi_{D}$ and we write $\chi_{D} = \chi_{m}\chi_{m}'$.

We choose the special point $y = \varepsilon_{0}$, such that $d_{P} = yy' = 1$. Its orthogonal complement is spanned by $y^{\perp} = \sqrt{D}\varepsilon_{0}$. Thus $d_{N} = y^{\perp}y^{\perp'} = D$. 
%If we write
%\[
%\gamma_{P} = \frac{\alpha_{P}}{2}\varepsilon_{0}, \quad \gamma_{N} = \frac{\alpha_{N}}{2D}\sqrt{D}\varepsilon_{0},
%\]
%then the condition $\gamma_{P} + \gamma_{N} \in L'$ is equivalent to $\alpha_{P} \equiv \alpha_{N} \pmod 2$ if $D$ is odd and to $\alpha_{N} \equiv 0 \pmod 2$ if $D$ is even. 
The space of cusp forms of weight $2$ for $\rho_{L^{-}}$ is trivial for a fundamental discriminant $D$ if and only if $D \leq 21$ (see Lemma~2 in \cite{williams}). This implies that the Maass-Poincar\'e series $F_{0,m}$ is weakly holomorphic for every $m$. We apply Theorem~\ref{theorem main identity anisotropic} for $f = F_{0,m}$ with the generating series of Hurwitz class numbers and obtain after some simplification the formula
\begin{align}
\sum_{r \in \Z}H(4m-Dr^{2}) = -\frac{1}{6L(-1,\chi_{D})}\sigma_{1}^{(D)}(Dm)-\sum_{\lambda \in R(m)}\tr_{F/\Q}\left(\frac{\lambda}{\varepsilon_{0}-1}\right)
\end{align}
for $D = 5,8,12,13,17,21$. Note that $L(-1,\chi_{D})$ is a rational number. These relations were first discovered by Williams \cite{williams} by comparing the Fourier coefficients of vector-valued Hirzebruch-Zagier series and Eisenstein series for the Weil representation.
\end{Example}

\subsection{Ramanujan's mock theta functions}\label{section mock theta functions}
By using a different theta function, the methods of this work can also be applied to obtain relations between the coefficients of $\xi_{1/2}$-preimages of unary theta series of weight $3/2$, e.g., of the coefficients of the classical mock theta functions of Ramanujan. We indicate the necessary steps but omit some details. Let $L$ be an even lattice of signature $(1,1)$. We fix an isotropic vector $\ell \in V$ and define the modified theta function
\[
\Theta_{L}^{*}(\tau,z) = v^{3/2}\sum_{\lambda \in L'}\left(\lambda,\frac{\ell_{z}}{|\ell_{z}|}\right)\left(\lambda,\frac{\ell_{z^{\perp}}}{|\ell_{z^{\perp}}|}\right)e(Q(\lambda_{z})\tau+Q(\lambda_{z^{\perp}})\overline{\tau})\e_{\lambda + L}.
\]
It has weight $0$ in $\tau$ for $\rho_{L}$, and at a special point $w \in \Gr(L)$ it splits as a tensor product of unary theta functions of weight $3/2$, similarly as in \eqref{splitting siegel theta}. The corresponding theta lift $\Phi^{*}(f,z)$ is defined analogously to \eqref{eq definition theta lift}. Theorem~\ref{theorem unfolding} and Theorem~\ref{theorem splitting computation} have similar looking analogs whose proofs require only minor modifications. In particular, if $L$ is a hyperbolic plane, we obtain the formula
\begin{align}\label{eq formula hyperbolic plane modified}
\begin{split}
&\frac{8}{\sqrt{y_{1}y_{2}}}\sum_{\substack{\lambda_{1},\lambda_{2} \in \N \\ \lambda_{1}\lambda_{2} = m}}\sgn(\lambda_{2}y_{1}-\lambda_{1}y_{2})\min\left(\lambda_{1}y_{2},\lambda_{2}y_{1}\right)  \\
&= \sum_{n \in \Z}a_{m}(-n)\sum_{r \in \Z}r  a_{\widetilde{\theta}_{3/2,d_{P}}}^{+}(\sigma_{y_{1}}(r),n-r^{2}/4d),
\end{split}
\end{align}
with the same notation as in \eqref{eq formula hyperbolic plane}.

For example, let $f(q)$ and $\omega(q)$ be Ramanujan's mock theta functions of order $3$. The fundamental work of Zwegers \cite{zwegerspaper,zwegers} showed that the $\C[\Z/12\Z]$-valued function
\begin{align*}
F(\tau) &= q^{-1/24}f(q)(\e_{1}-\e_{5}+\e_{7}-\e_{11}) \\
&\quad + 2q^{1/3}(\omega(q^{1/2})-\omega(-q^{1/2}))(-\e_{2}+\e_{10}) \\
&\quad + 2q^{1/3}(\omega(q^{1/2})+\omega(-q^{1/2}))(-\e_{4}+\e_{8})
\end{align*}
can be completed to a harmonic Maass form of weight $1/2$ for the Weil representation of the lattice $(\Z,6x^{2})$ which maps to a linear combination of unary theta functions of weight $3/2$
under $\xi_{1/2}$ (compare \cite{bruinieronoheegnerdivisors}, Section 8.2). We apply formula \eqref{eq formula hyperbolic plane modified} with the special points $y =(1,6)$ and $y = (2,3)$ and sum up the resulting equations. After some simplification we obtain the formula
\begin{align}
\begin{split}
&\sum_{\substack{r \in \Z \\ r^{2} \equiv 1 (24)}}\!\!\!\left(\frac{-12}{r}\right)ra_{f}\left(m-\frac{r^{2}}{24}+\frac{1}{24}\right) + 4\!\!\!\sum_{\substack{r \in \Z \\ r^{2}\equiv 4 (12)}}\!\!\! e\left(\frac{r^{2}-4}{24}\right)\left(\frac{-3}{r}\right)ra_{\omega}\left(2m-\frac{r^{2}}{12}-\frac{2}{3}\right)  \\
&=-48\sigma_{1}(m) +  \sum_{d \mid m}\big(\sgn(d-6\tfrac{m}{d})\min(d,6\tfrac{m}{d})+\sgn(2d-3\tfrac{m}{d})\min(2d,3\tfrac{m}{d}) \big)
\end{split}
\end{align}
for $m \in \N$. We remark that more refined relations for the coefficients of Ramanujan's $f(q)$ and $w(q)$ were given by Imamoglu, Raum and Richter \cite{irr} and Chan, Mao and Osburn \cite{chanmaoosburn}. Their methods work for other mock theta functions, too.
%Although our methods could be used to recover their formulas, it seems that the necessary computations are roughly of the same complexity as the holomorphic projection method from \cite{irr}, hence we do not carry out the details. 
Klein and Kupka \cite{kleinkupka} explicitly worked out the completions to vector-valued harmonic Maass forms for the Weil representation for many of Ramanujan's classical mock theta functions. Hence the methods of the present work can be used to derive recursions for their coefficients as well.

\subsection{A lattice of signature $(1,2)$}\label{section signature 12} Here we discuss the example from the introduction. The lattice $\Z^{3}$ with the quadratic form $Q(a,b,c) = -b^{2}+ac$ can also be realized as the set
\[
L = \left\{\lambda = \begin{pmatrix}b & c \\ -a & -b \end{pmatrix}: a,b,c \in \Z\right\}
\]
of traceless integral $2$ by $2$ matrices with the quadratic form $Q(\lambda) = \det(\lambda)$ and the associated bilinear form $(\lambda,\mu) = -\tr(\lambda\mu)$. Its dual lattice is given by
\[
L' = \left\{\lambda = \begin{pmatrix}b /2& c \\ -a & -b/2 \end{pmatrix}: a,b,c \in \Z\right\}.
\]
In particular, $L$ has level $4$ and $L'/L$ can be identified with $\Z/2\Z$. 

By the results of \cite{eichlerzagier}, Section 5, vector-valued modular forms of weight $k$ for the Weil representation $\rho_{L}$ can be identified with scalar-valued modular forms of weight $k$ for $\Gamma_{0}(4)$ satisfying the Kohnen plus space condition via the map
\[
f_{0}(\tau)\e_{0} + f_{1}(\tau)\e_{1} \mapsto f_{0}(4\tau) + f_{1}(4\tau).
\]
In particular, we can use scalar-valued modular forms as inputs for the theta lift. 
%
%Recall that the space of cusp forms of weight $5/2$ for $\Gamma_{0}(4)$ satisfying the Kohnen plus space condition is isomorphic to the space of cusp forms of weight $4$ for $\SL_{2}(\Z)$ by the Shimura correspondence, and is therefore trivial. 
Under the above map the vector-valued Maass Poincar\'e series $F_{\beta,m}$ of weight $-1/2$ corresponds to the scalar-valued weakly holomorphic modular form $2f_{D}$ with $D=4m$, where $f_{D} = q^{-D} + O(1)$ is the modular form described in the introduction. Note that $f_{D}$ is weakly holomorphic since the Kohnen plus space of cusp forms of weight $5/2$ for $\Gamma_{0}(4)$ is trivial, which in turn follows from the fact that it is isomorphic to the space of cusp forms of weight $6$ for $\SL_{2}(\Z)$ under the Shimura correspondence.

The modular group $\Gamma = \SL_{2}(\Z)$ acts on $L$ as isometries by $\gamma.X = \gamma X \gamma^{-1}$. Furthermore, the Grassmannian $\Gr(L)$ of positive lines in $V= L \otimes \R$ can be identified with the upper half-plane $\H$ by mapping $z \in \H$ to the positive line generated by
\[
\lambda(z) = \frac{1}{\sqrt{2}y}\begin{pmatrix}-x & x^{2}+y^{2} \\ -1 & x \end{pmatrix}.
\]
Note that $(\lambda(z),\lambda(z)) =1$. Under this identification the action of $\Gamma$ on $\Gr(L)$ corresponds to the action by fractional linear transformations on $\H$. A short compuation shows that for $\lambda = \left(\begin{smallmatrix}b/2 & c \\ -a & -b/2 \end{smallmatrix}\right) \in L'$ and $z = x+iy \in \H \cong \Gr(L)$ we have
\begin{align*}
Q(\lambda_{z}) &= Q((\lambda,\lambda(z))\lambda(z))= \frac{1}{2}(\lambda,\lambda(z))^{2} = \frac{1}{4y^{2}}(a|z|^{2}+bx + c)^{2}, \\
Q(\lambda_{z^{\perp}}) &= Q(\lambda) - Q(\lambda_{z}) = -\frac{1}{4y^{2}}|az^{2}+bz+c|^{2}.
\end{align*}
We can now explain how the theorems in the introduction follow from the results from the body of the paper.

\begin{proof}[Proof of Theorem~\ref{theorem unfolding introduction}]
Using the above evaluations of $Q(\lambda_{z})$ and $Q(\lambda_{z^{\perp}})$, from Theorem~\ref{theorem unfolding} and Remark~\ref{remark unfolding} we obtain the series representation
\[
	\Phi(f_{D},z) = 4\sum_{[a,b,c]\in \mathcal{Q}_{D}}\left(\sqrt{D}-\frac{1}{y}|a|z|^{2}+bx+c|\arcsin\left(\frac{y\sqrt{D}}{|az^{2}+bz+c|} \right) \right),
	\]
which yields Theorem~\ref{theorem unfolding introduction} in the introduction.
\end{proof}

\begin{proof}[Proof of Theorem~\ref{theorem fourier expansion introduction}]
We consider the isotropic vectors
\[
\ell = \begin{pmatrix}0 & 1 \\ 0 & 0 \end{pmatrix}, \qquad \ell' = \begin{pmatrix}0 & 0\\ -1 & 0 \end{pmatrix} ,
\]
in $L$ with $(\ell,\ell') = 1$ and the one-dimensional negative definite lattice
\[
K = L \cap \ell^{\perp} \cap \ell'^{\perp} = \left\{\lambda = \begin{pmatrix}b & 0 \\ 0 & -b \end{pmatrix} : b \in \Z\right\} \cong (\Z,-b^{2}).
\]
Clearly, we can identify hyperbolic $2$-space $\H^{2}$ with the complex upper half-plane $\H$. Using Theorem~\ref{theorem fourier expansion} we now easily obtain Theorem~\ref{theorem fourier expansion introduction} from the introduction.
\end{proof}

\begin{proof}[Proof of Theorem~\ref{theorem splitting introduction}]
The CM point $i \in \H$ corresponds to the special point $w \in \Gr(L)$ generated by the matrix $\left(\begin{smallmatrix}0 & 1 \\ -1 & 0 \end{smallmatrix} \right)$. The corresponding positive and negative definite lattices are given by
\[
P = L \cap w = \left\{x\begin{pmatrix}0 & 1 \\ -1 & 0 \end{pmatrix}: x\in \Z\right\}, \qquad N = L \cap w^{\perp} = \left\{\begin{pmatrix}b & -a & \\ -a &-b \end{pmatrix}: a,b \in \Z \right\}.
\]
In particular, we see that $P$ is isomorphic to $(\Z,x^{2})$ and $N$ is isomorphic to $(\Z^{2},-a^{2}-b^{2})$. If we now apply Theorem~\ref{theorem splitting computation} together with Remark~\ref{remark constant term} to Zagier's $\C[\Z/2\Z]$-valued generating of Hurwitz class numbers discussed in Example~\ref{example hurwitz}, we obtain Theorem~\ref{theorem splitting introduction}.
\end{proof}
%
%\subsection{A lattice of signature $(1,2)$} Here we discuss the example given in the introduction. Let $L$ be an even lattice of signature $(1,2)$.
%
%
%
%\begin{proof}
%	Using
%	\[
%	_{2}F_{1}(x) = \frac{3}{x}\left(1-\sqrt{\frac{1}{x}-1}\arcsin\left(\sqrt{x}\right)\right)
%	\]
%	for $0 < x \leq 1$, the proposition follows by a short computation from Proposition~\ref{proposition unfolding}.
%\end{proof}
%
%Unfortunately, the series in the proposition is always an infinite series and is therefore not very useful in finding relations of mock theta functions. However, we can obtain such relations by comparing the 

\bibliography{references}{}
\bibliographystyle{plain}

\end{document}